\documentclass[12pt,a4paper,american]{article}
\usepackage{amsmath}
\usepackage{amssymb}

\makeatletter


\newcommand{\lyxaddress}[1]{
\par {\raggedright #1
\vspace{1.4em}
\noindent\par}
}

\usepackage{amsfonts}
\usepackage{amsthm}
\usepackage{mathrsfs}

\addtolength{\textwidth}{4em}
\addtolength{\hoffset}{-2em}
\addtolength{\textheight}{10ex}
\addtolength{\voffset}{-6ex}

\newtheorem{theorem}{Theorem}
\newtheorem{proposition}[theorem]{Proposition}
\newtheorem{lemma}[theorem]{Lemma}
\newtheorem{corollary}[theorem]{Corollary}

\theoremstyle{remark}
\newtheorem{remark}[theorem]{Remark}
\newtheorem{example}[theorem]{Example}
\newtheorem*{question*}{QUESTION}


\newcommand{\sI}{\mathscr{I}}


\newcommand{\complex}{\mathbb{C}}

\newcommand{\Tr}{\mathop\mathrm{Tr}\nolimits}

\newcommand{\der}{\mathop\mathrm{der}\nolimits}

\newcommand{\spec}{\mathop\mathrm{spec}\nolimits}

\renewcommand{\Re}{\mathop\mathrm{Re}\nolimits}

\renewcommand{\det}{\mathop\mathrm{det}\nolimits}


\makeatother

\makeatother

\usepackage{babel}

\begin{document}

\title{Factorization of the characteristic function of a Jacobi matrix}

\author{F.~\v{S}tampach$^{1}$, P.~\v{S}\v{t}ov\'\i\v{c}ek$^{2}$}

\date{{}}

\maketitle

\lyxaddress{$^{1}$Department of Applied Mathematics, Faculty of Information
Technology, Czech Technical University in~Prague, Kolejn\'\i~2,
160~00 Praha, Czech Republic}

\lyxaddress{$^{2}$Department of Mathematics, Faculty of Nuclear Science, Czech
Technical University in Prague, Trojanova 13, 120~00 Praha, Czech Republic}

\begin{abstract}
  \noindent%
  In a recent paper a class of infinite Jacobi matrices with discrete
  character of spectra has been introduced. With each Jacobi matrix from this
  class an analytic function is associated, called the characteristic
  function, whose zero set coincides with the point spectrum of the
  corresponding Jacobi operator. Here it is shown that the characteristic
  function admits Hadamard's factorization in two possible ways -- either in
  the spectral parameter or in an auxiliary parameter which may be called the
  coupling constant. As an intermediate result, an explicit expression for the
  power series expansion of the logarithm of the characteristic function is
  obtained.
\end{abstract}
\vskip\baselineskip\emph{Keywords}: infinite Jacobi matrix, characteristic
function, Hadamard's factorization%

\smallskip\noindent
\emph{MSC codes}: 47B36, 33C99, 11A55


\section{Introduction}

In \cite{StampachStovicek2} we have introduced a class of infinite Jacobi
matrices characterized by a simple convergence condition. Each Jacobi matrix
from this class unambiguously determines a closed operator on
$\ell^{2}(\mathbb{N})$ having a discrete spectrum.  Moreover, with such a
matrix one associates a complex function, called the characteristic function,
which is analytic on the complex plane with the closure of the range of the
diagonal sequence being excluded, and meromorphic on the complex plane with
the set of accumulation points of the diagonal sequence being excluded. It
turns out that the zero set of the characteristic function actually coincides
with the point spectrum of the corresponding Jacobi operator on the domain of
definition (with some subtleties when handling the poles; see
Theorem~\ref{thm:basic_charfce} below).

The aim of the current paper is to show that the characteristic function
admits Hadamard's factorization in two possible ways. First, assuming
that the Jacobi matrix is real and the corresponding operator self-adjoint,
we derive a factorization in the spectral parameter. Further, for
symmetric complex Jacobi matrices we assume the off-diagonal elements
to depend linearly on an auxiliary parameter which we call, following
physical terminology, the coupling constant. The second factorization
formula then concerns this parameter.

Many formulas throughout the paper are expressed in terms of a function,
called $\mathfrak{F}$, which is defined on a suitable subset of the
linear space of all complex sequences; see \cite{StampachStovicek}
for its original definition. This function was also heavily employed
in \cite{StampachStovicek2}. So we start from recalling its definition
and basic properties. Apart of the announced Hadamard factorization
we derive, as an intermediate step, a formula for $\log\mathfrak{F}(x)$.

Define $\mathfrak{F}:D\rightarrow\complex$, 
\begin{displaymath}
\mathfrak{F}(x) = 1+\sum_{m=1}^{\infty}(-1)^{m}\sum_{k_{1}=1}^{\infty}\,
\sum_{k_{2}=k_{1}+2}^{\infty}\,\cdots\,\sum_{k_{m}=k_{m-1}+2}^{\infty}
\, x_{k_{1}}x_{k_{1}+1}x_{k_{2}}x_{k_{2}+1}\cdots x_{k_{m}}x_{k_{m}+1},
\end{displaymath}
where 
\begin{equation}
\label{eq:domain_D}
D=\left\{ \{x_{k}\}_{k=1}^{\infty}\subset\complex;\,\sum_{k=1}^{\infty}|x_{k}x_{k+1}|<\infty\right\} .
\end{equation}
For a finite number of complex variables we identify $\mathfrak{F}(x_{1},x_{2},\dots,x_{n})$
with $\mathfrak{F}(x)$ where $x=(x_{1},x_{2},\dots,x_{n},0,0,0,\dots)$.
By convention, let $\mathfrak{F}(\emptyset)=1$ where $\emptyset$
is the empty sequence.

Notice that $\ell^{2}(\mathbb{N})\subset D$. For $x\in D$, one has
the estimates 
\begin{equation}
\left|\mathfrak{F}(x)\right|\leq\exp\!\left(\sum_{k=1}^{\infty}|x_{k}x_{k+1}|\right)\!,\ \ \left|\mathfrak{F}(x)-1\right|\leq\exp\!\left(\sum_{k=1}^{\infty}|x_{k}x_{k+1}|\right)-1,\label{eq:estim_Fx_exp}
\end{equation}
and it is true that
\begin{equation}
\mathfrak{F}(x)=\lim_{n\rightarrow\infty}\mathfrak{F}(x_{1},x_{2},\dots,x_{n}).\label{eq:F_fin_lim}
\end{equation}
Let us also point out a simple
invariance property. For $x\in D$ and $s\in\complex$, $s\neq0$,
it is true that $y\in D$ and 
\begin{equation}
\mathfrak{F}(x)=\mathfrak{F}(y),\ \text{where\ }\ y_{2k-1}=sx_{2k-1},\ y_{2k}=x_{2k}/s,\ k\in\mathbb{N}.\label{eq:F_scale_inv}
\end{equation}

We shall deal with symmetric Jacobi matrices 
\begin{equation}
\label{eq:J}
J = \begin{bmatrix} \ \lambda_{1} & w_{1}\ \\
\ w_{1} & \lambda_{2} & w_{2}\ \\
\  & w_{2} & \lambda_{3} & w_{3}\ \\
\  &  & \ddots & \ddots & \ddots\ 
\end{bmatrix}\!,
\end{equation}
where $\lambda=\{\lambda_{n}\}_{n=1}^{\infty}\subset\complex$ and
$w=\{w_{n}\}_{n=1}^{\infty}\subset\complex\setminus\{0\}$. Let
us put
\begin{equation}
\label{eq:gamma_def}
\gamma_{2k-1}=\prod_{j=1}^{k-1}\frac{w_{2j}}{w_{2j-1}}\,,\mbox{ }
\gamma_{2k}=w_{1}\prod_{j=1}^{k-1}\frac{w_{2j+1}}{w_{2j}}\,,\mbox{ }
k=1,2,3,\ldots.
\end{equation}
Then $\gamma_{k}\gamma_{k+1}=w_{k}$.

For $n\in\mathbb{N}$, let $J_{n}$ be the $n\times n$ Jacobi matrix:
$(J_{n})_{j,k}=J_{j,k}$ for $1\leq j,k\leq n$, and $I_{n}$ be the
$n\times n$ unit matrix. Then the formula
\begin{equation}
\label{eq:char_pol_general}
\det(J_{n}-zI_{n})=\left(\prod_{k=1}^{n}(\lambda_{k}-z)\right)
\mathfrak{F}\!\left(\frac{\gamma_{1}^{\,2}}{\lambda_{1}-z},
\frac{\gamma_{2}^{\,2}}{\lambda_{2}-z},\dots,
\frac{\gamma_{n}^{\,2}}{\lambda_{n}-z}\right)\!.
\end{equation}
holds true for all $z\in\complex$ (after obvious cancellations,
the RHS is well defined even for $z=\lambda_{k}$; here and throughout
RHS means ``right-hand side'', and similarly for LHS).

Let us denote
\[
\complex_{0}^{\lambda}:=\complex\setminus\overline{\{\lambda_{n};\, n\in\mathbb{N}\}}\,.
\]
Moreover, $\der(\lambda)$ designates the set of all accumulation
points of the sequence $\lambda$. The following theorem is a compilation
of several results from \cite[Subsec.~3.3]{StampachStovicek2}.

\begin{theorem} \label{thm:basic_charfce} Let a Jacobi matrix $J$
be real and suppose that
\begin{equation}
\label{eq:converg_cond}
\sum_{n=1}^{\infty}\left|\frac{w_{n}^{\,2}}{(\lambda_{n}-z)
(\lambda_{n+1}-z)}\right|<\infty
\end{equation}
for at least one $z\in\complex_{0}^{\lambda}$. Then\\
(i)~$J$ represents a unique self-adjoint operator on $\ell^{2}(\mathbb{N})$,\\
(ii)~$\spec(J)\cap\left(\complex\setminus\der(\lambda)\right)$
consists of simple real eigenvalues with no accumulation points in
$\complex\setminus\der(\lambda)$,\\
(iii)~the series (\ref{eq:converg_cond}) converges locally uniformly
on $\complex_{0}^{\lambda}$ and 
\begin{equation}
\label{eq:F_J_z}
F_{J}(z):=\mathfrak{F}\!\left(\left\{ \frac{\gamma_{n}^{\,2}}
{\lambda_{n}-z}\right\} _{n=1}^{\infty}\right)
\end{equation}
is a well defined analytic function on $\complex_{0}^{\lambda}$,\\
(iv)~$F_{J}(z)$ is meromorphic on $\complex\setminus\der(\lambda)$,
the order of a pole at $z\in\complex\setminus\der(\lambda)$ is
less than or equal to the number $r(z)$ of occurrences of $z$ in
the sequence $\lambda$,\\
(v)~$z\in\complex\setminus\der(\lambda)$ belongs to $\spec(J)$
if and only if 
\[
\lim_{u\to z}(z-u)^{r(z)}F_{J}(u)=0
\]
and, in particular, $\spec(J)\cap\complex_{0}^{\lambda}=\spec_{p}(J)\cap\complex_{0}^{\lambda}=F_{J}^{\,-1}(\{0\})$.
\end{theorem}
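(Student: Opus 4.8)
The plan is to prove the analytic assertions (iii) and (iv) first and then read off the operator-theoretic ones (i), (ii), (v). The unifying observation is that, because $\gamma_n^{\,2}\gamma_{n+1}^{\,2}=w_n^{\,2}$, the sequence $x(z):=\{\gamma_n^{\,2}/(\lambda_n-z)\}_{n=1}^{\infty}$ obeys $x_n(z)\,x_{n+1}(z)=w_n^{\,2}/\bigl((\lambda_n-z)(\lambda_{n+1}-z)\bigr)$, so the convergence condition \eqref{eq:converg_cond} at a point $z$ is exactly the membership $x(z)\in D$; hence the estimates \eqref{eq:estim_Fx_exp} and the finite-section limit \eqref{eq:F_fin_lim} apply pointwise, and the work is to make them locally uniform. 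For (iii) I would fix $z_0$ satisfying \eqref{eq:converg_cond} and a compact $K\subset\complex_0^\lambda$; on $K$ one has $\dist(K,\overline{\{\lambda_n\}})\geq\delta>0$, so each ratio $(\lambda_n-z_0)/(\lambda_n-z)=1+(z-z_0)/(\lambda_n-z)$ is bounded uniformly in $n$ and $z\in K$. This dominates $\sum_n|x_n(z)\,x_{n+1}(z)|$ by a fixed multiple of the convergent series at $z_0$, uniformly on $K$. Locally uniform convergence of this majorant, fed into the tail estimate underlying \eqref{eq:F_fin_lim}, exhibits $F_J$ as the locally uniform limit of the polynomials $\mathfrak{F}(x_1(z),\dots,x_n(z))$, each analytic on $\complex_0^\lambda$, and the Weierstrass convergence theorem gives (iii).

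For (iv) I would fix $z_0\in\complex\setminus\der(\lambda)$; since $z_0$ is not an accumulation point, a punctured disk about $z_0$ contains no $\lambda_n$, so precisely the $r:=r(z_0)$ indices with $\lambda_n=z_0$ produce simple poles of the corresponding coordinates $x_n(z)$, while every other coordinate stays analytic there. The decisive structural fact is that $\mathfrak{F}$ is \emph{multilinear}: each variable enters it to at most the first power, because the index blocks $\{k_j,k_j+1\}$ in its definition are pairwise disjoint. Consequently every monomial of $F_J$ involves at most these $r$ singular coordinates, each to the first power, so multiplying by $\prod_{\lambda_n=z_0}(\lambda_n-z)=(z_0-z)^{r}$ clears all singular denominators. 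Hence $(z_0-z)^{r}F_J(z)$ extends holomorphically across $z_0$ and the pole order is at most $r$, which is (iv).

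Assertions (i), (ii) and the $\complex_0^\lambda$ part of (v) follow from the Weyl-solution machinery. Since $\lambda$ is real, $\der(\lambda)\subset\reals$ and every non-real $z$ lies in $\complex_0^\lambda$; for such $z$ I would construct the decaying solution of the recurrence $w_{n-1}u_{n-1}+(\lambda_n-z)u_n+w_nu_{n+1}=0$ by expressing its entries through $\mathfrak{F}$ of the tail sequences $\{x_k(z)\}_{k\geq n}$, whose $\ell^2$ summability is exactly what \eqref{eq:converg_cond} supplies. Uniqueness up to scalars of this $\ell^2$ solution is the limit-point property, so the minimal operator is essentially self-adjoint and $J$ determines a unique self-adjoint operator, proving (i). This Weyl solution additionally satisfies the boundary condition $(\lambda_1-z)u_1+w_1u_2=0$ at $n=1$ — equivalently $z$ is an eigenvalue — precisely when $F_J(z)=0$, which identifies $\spec(J)\cap\complex_0^\lambda$ with $F_J^{-1}(\{0\})$; the condition $w_n\neq0$ forces one-dimensional eigenspaces, giving simplicity. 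Non-accumulation of eigenvalues in $\complex\setminus\der(\lambda)$ then follows because $F_J$ is analytic and not identically zero — indeed $F_J(z)\neq0$ whenever $\Im z\neq0$, the spectrum being real — so its zeros are isolated. This yields (ii).

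I expect the main obstacle to be the behaviour of (v) at the poles, i.e.\ at the points $z=\lambda_n\in\complex\setminus\der(\lambda)$ where $F_J$ is not even defined in the naive sense. There the clean criterion ``$F_J(z)=0$'' must be replaced by the regularized condition $\lim_{u\to z}(z-u)^{r(z)}F_J(u)=0$, and proving that this correctly detects membership in $\spec(J)$ requires matching the order of vanishing of this regularized limit against the dimension of the space of admissible recurrence solutions at such a resonant $z$. Carrying this out amounts to carefully tracking how the factors $(\lambda_k-z)$ from the prefactor in \eqref{eq:char_pol_general} cancel against the poles of the arguments of $\mathfrak{F}$ — the ``subtleties when handling the poles'' flagged after the theorem — and this delicate bookkeeping is where the real effort lies.
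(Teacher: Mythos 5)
First, a point of reference: the paper contains no proof of this theorem at all --- it is stated explicitly as a compilation of results quoted from \cite[Subsec.~3.3]{StampachStovicek2} --- so your proposal can only be judged against what a complete proof would require, not against an argument in the text. Your treatment of (iii) and (iv) is essentially sound: the domination $|(\lambda_n-z_0)/(\lambda_n-z)|\leq 1+|z-z_0|/\delta$ on compact subsets of $\complex_0^{\lambda}$ gives local uniform convergence, and the observation that $\mathfrak{F}$ is multilinear (the blocks $\{k_j,k_j+1\}$ are pairwise disjoint because $k_{j+1}\geq k_j+2$) is the right mechanism for bounding the pole order by $r(z)$; you would only need to add a convergence argument justifying that the singularity-cleared infinite series still converges near $z_0$, e.g.\ by splitting $\mathfrak{F}$ into a finite head, whose poles are visible from (\ref{eq:char_pol_general}), and a tail that is analytic at $z_0$.

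The genuine gap is in (i), and it contaminates (ii) and (v). You construct an $\ell^2$ Weyl-type solution of the recurrence from $\mathfrak{F}$ of tail sequences and then declare that ``uniqueness up to scalars of this $\ell^2$ solution is the limit-point property, so the minimal operator is essentially self-adjoint.'' But exhibiting one $\ell^2$ solution proves nothing about uniqueness: in the limit circle case \emph{every} solution of the recurrence is $\ell^2$, so the existence of your decaying solution is perfectly consistent with deficiency indices $(1,1)$, in which case (i) is false and $J_{\max}$ even has non-real eigenvalues. Nothing in your argument invokes the hypothesis (\ref{eq:converg_cond}) to exclude the limit circle alternative, yet that exclusion \emph{is} the content of (i); it is also what the rest of your argument silently uses, since the identification $\spec(J)\cap\complex_0^{\lambda}=F_J^{-1}(\{0\})$ requires that every eigenvector be proportional to the tail solution, i.e.\ that the space of $\ell^2$ solutions be one-dimensional. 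Your supporting claim that $F_J(z)\neq 0$ whenever $\Im z\neq 0$ because ``the spectrum is real'' presupposes self-adjointness, so it cannot serve as input to proving it. A correct proof must either establish the limit-point property from (\ref{eq:converg_cond}) directly, or (as in \cite{StampachStovicek2}) build a bounded resolvent from $\mathfrak{F}$-expressions and deduce self-adjointness from surjectivity of $J-z$ at a conjugate pair of points; neither is sketched. (A smaller imprecision in the same step: the $\ell^2$ decay of the tail solution comes from the product prefactors multiplying the $\mathfrak{F}$ factors, not from the ``$\ell^2$ summability'' of the tail sequences themselves, and this too requires an argument.)

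Separately, you explicitly defer the case $r(z)\geq 1$ of (v) --- the regularized criterion $\lim_{u\to z}(z-u)^{r(z)}F_J(u)=0$ at points $z=\lambda_n$ --- calling it ``where the real effort lies.'' That is an acknowledged hole rather than a proof: this part of (v), together with the corresponding statements of (ii) at such points, is a substantive assertion of the theorem and remains unestablished in your proposal.
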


We will mostly focus on real Jacobi matrices, except Section~\ref{sec:coupling}.
For our purposes the following particular case, a direct consequence
of a more general result derived in \cite[Subsec.~3.3]{StampachStovicek2},
will be sufficient.

\begin{theorem} \label{thm:HilbertSchmidt_charfce} Let $J$ be a
complex Jacobi matrix of the form (\ref{eq:J}) obeying $\lambda_{n}=0$,
$\forall n$, and $\{w_{n}\}\in\ell^{2}(\mathbb{N})$. Then $J$ represents
a Hilbert-Schmidt operator, $F_{J}(z)$ is analytic on $\complex\setminus\{0\}$
and
\[
\spec(J)\setminus\{0\}=\spec_{p}(J)\setminus\{0\}=F_{J}^{\,-1}(\{0\}).
\]
\end{theorem}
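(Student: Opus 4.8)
The plan is to deduce the statement as a specialization of the general results of \cite[Subsec.~3.3]{StampachStovicek2} to the degenerate diagonal $\lambda\equiv0$, combined with the elementary spectral theory of compact operators. First I would verify the Hilbert--Schmidt property directly: since $\lambda_{n}=0$ for all $n$ and each weight $w_{n}$ occurs exactly twice in $J$,
\[
\sum_{j,k=1}^{\infty}|J_{j,k}|^{2}=\sum_{n=1}^{\infty}|\lambda_{n}|^{2}+2\sum_{n=1}^{\infty}|w_{n}|^{2}=2\sum_{n=1}^{\infty}|w_{n}|^{2}<\infty,
\]
because $w\in\ell^{2}(\mathbb{N})$. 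Hence $J$ defines a Hilbert--Schmidt, hence bounded and compact, operator on $\ell^{2}(\mathbb{N})$. By the Riesz--Schauder theory, $\spec(J)$ is then at most countable with $0$ as its only possible accumulation point and every nonzero spectral value is an eigenvalue of finite multiplicity, which already yields $\spec(J)\setminus\{0\}=\spec_{p}(J)\setminus\{0\}$.

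Next I would identify the objects entering the general theorem. As $\lambda_{n}=0$ for every $n$, we have $\overline{\{\lambda_{n};\,n\in\mathbb{N}\}}=\{0\}$, so $\complex_{0}^{\lambda}=\complex\setminus\{0\}$, while $\der(\lambda)=\{0\}$ and the occurrence count satisfies $r(z)=0$ for each $z\neq0$. Substituting $\lambda_{n}=\lambda_{n+1}=0$, the convergence condition (\ref{eq:converg_cond}) reduces to
\[
\sum_{n=1}^{\infty}\left|\frac{w_{n}^{\,2}}{(\lambda_{n}-z)(\lambda_{n+1}-z)}\right|=\frac{1}{|z|^{2}}\sum_{n=1}^{\infty}|w_{n}|^{2},
\]
which is finite for every $z\in\complex\setminus\{0\}$. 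In particular the argument sequence $\{\gamma_{n}^{\,2}/(\lambda_{n}-z)\}=\{-\gamma_{n}^{\,2}/z\}$ lies in $D$ for each such $z$, so $F_{J}(z)=\mathfrak{F}(\{-\gamma_{n}^{\,2}/z\})$ is well defined on $\complex\setminus\{0\}$ and the hypotheses of the underlying results are met throughout $\complex_{0}^{\lambda}$.

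Finally I would draw the two remaining conclusions. Because $r(z)=0$ off the origin, the meromorphic function $F_{J}$ has no poles on $\complex\setminus\{0\}$ and is therefore analytic there (alternatively, by (\ref{eq:F_fin_lim}) and (\ref{eq:estim_Fx_exp}) it is the locally uniform limit of the analytic truncations $\mathfrak{F}(-\gamma_{1}^{\,2}/z,\dots,-\gamma_{n}^{\,2}/z)$). The spectral criterion of the general theorem, specialized to $r(z)=0$, states that $z\in\spec(J)$ if and only if $F_{J}(z)=0$, and together with the compactness argument this gives $\spec(J)\setminus\{0\}=\spec_{p}(J)\setminus\{0\}=F_{J}^{\,-1}(\{0\})$. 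The computations are routine; the only point requiring care is that Theorem~\ref{thm:basic_charfce} as stated assumes $J$ real and self-adjoint, whereas here $J$ is merely complex symmetric, so one must invoke the more general complex-symmetric results of \cite[Subsec.~3.3]{StampachStovicek2} and correctly pin down $\der(\lambda)=\{0\}$ so that the pole order vanishes away from the origin---this is exactly what turns the a priori meromorphic $F_{J}$ into an analytic function and collapses the spectral criterion to the plain vanishing condition.
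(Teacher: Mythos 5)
Your proposal is correct and takes essentially the same route as the paper, which offers no independent proof of this statement: it presents the theorem as ``a direct consequence of a more general result derived in \cite[Subsec.~3.3]{StampachStovicek2}'', and that same citation is what your argument ultimately rests on for the key identification $\spec_{p}(J)\setminus\{0\}=F_{J}^{\,-1}(\{0\})$. Your supplementary verifications---the Hilbert--Schmidt bound on the matrix entries, the Riesz--Schauder argument giving $\spec(J)\setminus\{0\}=\spec_{p}(J)\setminus\{0\}$, the reduction of condition (\ref{eq:converg_cond}) to $|z|^{-2}\sum_{n}|w_{n}|^{2}$, and the caveat that Theorem~\ref{thm:basic_charfce} as stated covers only real $J$ so the complex-symmetric results of the reference must be invoked---correctly fill in what the paper leaves implicit.
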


\section{The logarithm of $\mathfrak{F}(x)$}

$\mathfrak{F}(x_{1},\ldots,x_{n})$ is a polynomial function in $n$ complex
variables, with $\mathfrak{F}\left(0\right)=1$, and so
$\log\mathfrak{F}(x_{1},\ldots,x_{n})$ is a well defined analytic function in
some neighborhood of the origin.  The goal of the current section is to derive
an explicit formula for the coefficients of the corresponding power series.

For a multiindex $m\in\mathbb{N}^{\ell}$ denote by $|m|=\sum_{j=1}^{\ell}m_{j}$
its order and by $d(m)=\ell$ its length. For $N\in\mathbb{N}$ define
\begin{equation}
\label{eq:McalN_def}
\mathcal{M}(N)=\left\{ m\in\underset{\ell=1}{\overset{N}{\bigcup}}\,
\mathbb{N}^{\ell};\,|m|=N\right\} .
\end{equation}
Obviously,
$\cup_{\ell=1}^{\infty}\mathbb{N}^{\ell}=\cup_{N=1}^{\infty}\mathcal{M}(N)$.
One has $\mathcal{M}(1)=\{(1)\}$ and
\begin{eqnarray*}
\mathcal{M}(N) & = & \left\{ \left(1,m_{1},m_{2},\ldots,m_{d(m)}\right);\,
  m\in\mathcal{M}(N-1)\right\} \\
&  & \cup\left\{ \left(m_{1}+1,m_{2},\ldots,m_{d(m)}\right);\,
  m\in\mathcal{M}(N-1)\right\} \!.
\end{eqnarray*}
Hence $|\mathcal{M}(N)|=2^{N-1}$ ($|\cdot|$ standing for the number
of elements). Furthermore, for an multiindex $m\in\mathbb{N}^{\ell}$
put
\begin{equation}
\label{eq:alpha_def}
\beta(m):=\prod_{j=1}^{\ell-1}\binom{m_{j}+m_{j+1}-1}{m_{j+1}},\
\alpha(m):=\frac{\beta(m)}{m_{1}}\,.
\end{equation}
\begin{proposition} \label{prop:logF} In the ring of formal power
series in the variables $t_{1},\ldots,t_{n}$, one has
\begin{equation}
\label{eq:logFt}
\log\mathfrak{F}(t_{1},\ldots,t_{n})
=-\sum_{\ell=1}^{n-1}\,\sum_{m\in\mathbb{N}^{\ell}}\,\alpha(m)\text{ }
\sum_{k=1}^{n-\ell}\,\prod_{j=1}^{\ell}\left(t_{k+j-1}t_{k+j}\right)^{m_{j}}.
\end{equation}
For a complex sequence $x=\{x_{k}\}_{k=1}^{\infty}$ such that $\sum_{k=1}^{\infty}|x_{k}x_{k+1}|<\log2$
one has
\begin{displaymath}
\log\mathfrak{F}(x)=-\sum_{\ell=1}^{\infty}\,
\sum_{m\in\mathbb{N}^{\ell}}\alpha(m)\sum_{k=1}^{\infty}
\prod_{j=1}^{\ell}\left(x_{k+j-1}x_{k+j}\right)^{m_{j}}.
\end{displaymath}
\end{proposition}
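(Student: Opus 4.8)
The plan is to realize $\mathfrak{F}$ as a determinant and then compute $\log\det$ via the trace of the logarithm, which reduces every coefficient to a count of closed walks on a path. First I would record the three-term recurrence for the truncations $F_n:=\mathfrak{F}(t_1,\dots,t_n)$. Splitting the defining sum according to whether the last index $n$ is paired with $n-1$ or left unused gives $F_n=F_{n-1}-t_{n-1}t_nF_{n-2}$ with $F_0=F_1=1$. This is exactly the continuant recurrence of a tridiagonal determinant, so $\mathfrak{F}(t_1,\dots,t_n)=\det(I_n-A_n)$, where $A_n$ is the $n\times n$ tridiagonal matrix with $(A_n)_{i,i+1}=t_it_{i+1}$, $(A_n)_{i+1,i}=1$ and zero diagonal. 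In particular $\mathfrak{F}$ depends only on the ``edge'' products $c_i:=t_it_{i+1}$, consistent with \eqref{eq:F_scale_inv}.

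Working in the ring of formal power series, I would next expand $\log\det(I_n-A_n)=\Tr\log(I_n-A_n)=-\sum_{p\ge1}\frac1p\Tr(A_n^{\,p})$. Here $\Tr(A_n^{\,p})$ is the sum over closed walks of length $p$ on the path $1,\dots,n$, each rightward step across edge $i$ carrying weight $c_i$ and each leftward step weight $1$; such a walk contributes the monomial $\prod_i c_i^{r_i}$, where $r_i$ is the common number of right- and left-crossings of edge $i$, and has length $p=2\sum_i r_i$. Because the underlying graph is a path, the edges crossed by a single walk form a contiguous block in which every edge is used, so each monomial has the shape $\prod_{j=1}^{\ell}c_{k+j-1}^{m_j}$ with all $m_j\ge1$ — precisely the index set of \eqref{eq:logFt}. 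Only $p=2|m|$ contributes to such a monomial, so every coefficient is finite and the formal series is well defined; collecting terms, the coefficient of $\prod_j c_{k+j-1}^{m_j}$ equals $-\frac{1}{2|m|}N(m)$, where $N(m)$ is the number of start-marked closed walks with crossing profile $m$. This count is translation invariant, and the block fits for $k=1,\dots,n-\ell$, $\ell=1,\dots,n-1$.

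It remains to prove the combinatorial identity $N(m)=\frac{2|m|}{m_1}\,\beta(m)$, which is the main obstacle and the only genuinely nontrivial step. I would compute $N(m)$ by regarding a walk with profile $m$ as an Eulerian circuit in the directed multigraph on vertices $0,\dots,\ell$ carrying $m_j$ parallel ``up'' edges and $m_j$ parallel ``down'' edges over edge $j$. The number of arborescences of this graph rooted at an endpoint equals $\prod_j m_j$ (on a path the spanning tree is forced and only the choice of parallel copy remains), while the out-degrees are $m_1$ and $m_\ell$ at the endpoints and $m_j+m_{j+1}$ at interior vertices. The BEST theorem then gives the number of labelled Eulerian circuits; dividing by $\prod_j(m_j!)^2$ to forget the labels of parallel edges and multiplying by the length $2|m|$ to mark the start yields, after cancellation of factorials, exactly $N(m)=\frac{2|m|}{m_1}\prod_{j=1}^{\ell-1}\binom{m_j+m_{j+1}-1}{m_{j+1}}$. (Alternatively one may argue by induction on $\ell$, peeling the leaf vertex $\ell$ and inserting its excursions at vertex $\ell-1$, the binomial arising from interleaving the $m_\ell$ new crossings with the existing ones; the reversal symmetry $N(m)=N(\bar m)$ is a useful check.) Substituting $\beta(m)/m_1=\alpha(m)$ from \eqref{eq:alpha_def} establishes \eqref{eq:logFt}.

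Finally, for the analytic statement I would pass to the limit $n\to\infty$. Writing $\sigma:=\sum_k|t_kt_{k+1}|<\log2$, the estimate \eqref{eq:estim_Fx_exp} gives $|\mathfrak{F}(x)-1|\le e^{\sigma}-1<1$, and the same for every truncation, so all values lie in the disc where the principal logarithm is analytic; with \eqref{eq:F_fin_lim} this yields $\log\mathfrak{F}(x)=\lim_n\log\mathfrak{F}(t_1,\dots,t_n)$. Since every $\alpha(m)\ge0$, substituting the nonnegative data $|t_kt_{k+1}|$ into the already-proved finite identity shows that the corresponding multiple series equals $-\log\mathfrak{F}$ evaluated at these data, which remains bounded because $e^{\sigma}-1<1$ keeps the argument inside the same disc — this is exactly where the threshold $\log2$ enters. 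With absolute convergence secured, I may rearrange freely and take the termwise limit in $n$ to obtain the stated expansion for $\log\mathfrak{F}(x)$.
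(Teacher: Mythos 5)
Your proof is correct, and its skeleton coincides with the paper's: realize $\mathfrak{F}(t_1,\dots,t_n)$ as a tridiagonal determinant, use $\log\det=\Tr\log$ so that $-\tfrac1p\Tr(A^p)$ collects closed walks on the path, observe that a closed walk crosses a contiguous block of edges with some profile $m\in\mathbb{N}^{\ell}$, and identify the coefficient of each monomial as $-\tfrac{1}{2|m|}$ times the number of start-marked loops with that profile. Where you genuinely diverge is in the two supporting lemmas. For the loop count (the paper's Lemma~\ref{lem:count_loops}, $|\Omega(m)|=2|m|\alpha(m)$) the paper argues in two steps: an insertion induction giving $|\Omega_{1}(m)|=\beta(m)$ for Dyck paths, then a cyclic-group orbit--stabilizer argument converting base-point-$1$ loops into all loops, producing the factor $2|m|/m_{1}$. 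You instead get the count in one stroke from the BEST theorem: the arborescence number $\prod_{j}m_{j}$, the out-degrees $m_{1}$, $m_{j-1}+m_{j}$, $m_{\ell}$, the division by $\prod_{j}(m_{j}!)^{2}$ and the factor $2|m|$ combine, after cancellation, to $\tfrac{2|m|}{m_{1}}\beta(m)$ --- I checked the algebra and it is right. This is slicker but imports heavier machinery; the paper's route is self-contained and also delivers the Dyck-path identity $|\Omega_{1}(m)|=\beta(m)$, which it wants for independent reasons. For the convergence part, the paper proves $\alpha(m)\le\binom{|m|}{m}/|m|$ (Lemma~\ref{lem:estim_alpha}) and dominates the series via the multinomial theorem by $-\log(1-\sigma)$, where $\sigma:=\sum_{k}|x_{k}x_{k+1}|$; you dispense with that lemma entirely, using only $\alpha(m)\ge0$ and the finite identity evaluated at the data $|x_{k}|$ to self-bound the series by $-\log(2-e^{\sigma})$, which is exactly where the hypothesis $\sigma<\log2$ enters --- a more economical argument. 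The one step you should make explicit is the upgrade of the formal power series identity to a numerical identity at the point $(|x_{1}|,\dots,|x_{n}|)$: this follows by scaling, since $s\mapsto\log\mathfrak{F}(s|x_{1}|,\dots,s|x_{n}|)$ is analytic on the disc $|s|<\sqrt{\log2/\sigma}$, whose radius exceeds $1$, because there $|\mathfrak{F}-1|\le e^{|s|^{2}\sigma}-1<1$; hence its Taylor series at $0$, which is the formal identity, converges at $s=1$ to the function value. With that sentence inserted, your argument is complete.
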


The proof of Proposition~\ref{prop:logF} is based on some combinatorial
notions among them that of Dyck path is quite substantial. For
$n\in\mathbb{N}$, $n\geq2$, we may regard the set
\[
\Lambda_{n}=\{1,2,\ldots,n\}
\]
as a finite one-dimensional lattice. We shall say that a mapping
\[
\pi:\{0,1,2,\ldots,2N\}\to\Lambda_{n}
\]
is a loop of length $2N$ in $\Lambda_{n}$, $N\in\mathbb{N}$, if
$\pi(0)=\pi(2N)$ and $|\pi(j+1)-\pi(j)|=1$ for $1\leq j\leq2N$. The
vertex $\pi(0)$ is called the base point of a loop. The loops in
$\Lambda_n$ with the base point $\pi(0)=1$ are commonly known as Dyck
paths of height not exceeding $n-1$. Indeed, if $\pi$ is such a loop
then its graph shifted by $1$,
\begin{displaymath}
  \{(j,\pi(j)-1);\,j=0,1,\ldots,2N\},
\end{displaymath}
represents a lattice path in the first quadrant leading from $(0,0)$ to
$(2N,0)$ whose all steps are solely $(1,1)$ and $(1,-1)$. Such a path is
called a Dyck path.

For $m\in\mathbb{N}^{\ell}$ denote by $\Omega(m)$ the set of all loops of
length $2|m|$ in $\Lambda_{\ell+1}$ which encounter each edge $(j,j+1)$
exactly $2m_{j}$ times, $1\leq{}j\leq\ell$ (counting both directions). Let
$\Omega_{1}(m)$ designate the subset of $\Omega(m)$ formed by those loops
which are based at the vertex $1$. In other words, $\Omega_{1}(m)$ is the set
of Dyck paths with the prescribed numbers $2m_j$ counting the steps at each
level $j=1,2,\ldots,\ell$. One can call $m$ the specification of a Dyck path.
If $\pi\in\Omega_{1}(m)$ then the sequence $(\pi(0),\pi(1),\ldots,\pi(2N-1))$,
with $N=|m|$, contains the vertex $1$ exactly $m_{1}$ times, the vertices $j$,
$2\leq j\leq\ell$, are contained $(m_{j-1}+m_{j})$ times in the sequence, and
the number of occurrences of the vertex $\ell+1$ equals $m_{\ell}$.

\begin{remark}
  It can be deduced from Theorem 3B in \cite{Flajolet} that
  $|\Omega_{1}(m)|=\beta(m)$. Let us recall the well known fact that
  there exists a bijection between the set of Dyck paths of length
  $2N$ and the set of rooted plane trees with $N$ edges (one can
  consult, for instance, \S\S~I.5 and I.6 in
  \cite{FlajoletSedgewick}). A rooted plane tree is said to have the
  specification $m\in\mathbb{N}^\ell$ if it has $|m|$ edges and the
  number of its vertices of height $j$ equals $m_j$,
  $j=1,2,\ldots,\ell$. Using the mentioned bijection one finds that
  $\beta(m)$ also equals the number of rooted plane trees with the
  specification $m$ \cite{Flajolet,Read}. More recently, this result
  has been rediscovered and described in \cite{Cicutaetal}. For the
  reader's convenience we nevertheless include this identity in the
  following lemma along with a short proof. The other identity in the
  lemma providing a combinatorial interpretation of the number
  $\alpha(m)$ seems to be, to the authors' best knowledge, new.
\end{remark}

\begin{lemma}
\label{lem:count_loops}
For every $\ell\in\mathbb{N}$ and $m\in\mathbb{N}^{\ell}$,
$\left|\Omega_{1}(m)\right|=\beta(m)$ and $|\Omega(m)|=2|m|\alpha(m)$.
\end{lemma}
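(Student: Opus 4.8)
The plan is to treat the two cardinalities in turn, first the count of Dyck paths $|\Omega_{1}(m)| = \beta(m)$ and then the reduction of the full loop count $|\Omega(m)|$ to it by a rotation argument.

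For $|\Omega_{1}(m)| = \beta(m)$ I would invoke the classical bijection between Dyck paths of length $2N$ and rooted plane trees with $N$ edges that the preceding remark recalls. Running the path by a depth-first traversal of the tree, an up-traversal of the edge $(j,j+1)$ in $\Lambda_{\ell+1}$ corresponds to descending for the first time along a tree edge from a vertex of height $j-1$ to a vertex of height $j$; thus $\pi \in \Omega_{1}(m)$ matches precisely a rooted plane tree carrying $m_{j}$ vertices at height $j$ for $1 \le j \le \ell$ (and a single root at height $0$). I would count these trees height by height: once the $m_{j}$ vertices at height $j$ are fixed in their left-to-right order, prescribing the height-$(j+1)$ layer is the same as assigning to each of these $m_{j}$ parents an ordered, possibly empty list of children, with $m_{j+1}$ children in all. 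Such distributions are counted by the weak compositions of $m_{j+1}$ into $m_{j}$ parts, of which there are $\binom{m_{j}+m_{j+1}-1}{m_{j+1}}$. Taking the product over $j = 1, \ldots, \ell-1$ --- the passage from the root to height $1$ contributes only the trivial factor $\binom{m_{1}}{m_{1}} = 1$ --- recovers $\beta(m)$.

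For $|\Omega(m)| = 2|m|\alpha(m)$ I would double count, over the visits of a loop to the vertex $1$. Writing $N = |m|$, the first point to establish is that every $\pi \in \Omega(m)$ meets the vertex $1$ exactly $m_{1}$ times among the positions $0, 1, \ldots, 2N-1$: because $1$ is adjacent only to $2$, each occurrence of $1$ in the (cyclically closed) sequence absorbs exactly one arrival and one departure along the edge $(1,2)$, and these exhaust its $2m_{1}$ traversals. Let $\mathcal{A}$ be the set of pairs $(\pi, i)$ with $\pi \in \Omega(m)$, $0 \le i \le 2N-1$ and $\pi(i) = 1$; counting through the first coordinate gives $|\mathcal{A}| = m_{1}|\Omega(m)|$. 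I would then send $(\pi, i)$ to the rotated loop $\pi'(j) := \pi\big((i+j) \bmod 2N\big)$, which is based at $1$ and lies in $\Omega_{1}(m)$ since rotation preserves both the edge-traversal counts and the step condition. As $i$ is recovered as the rotation amount, the map $(\pi, i) \mapsto (\pi', i)$ is a bijection of $\mathcal{A}$ onto $\Omega_{1}(m) \times \{0, 1, \ldots, 2N-1\}$, whence $|\mathcal{A}| = 2N|\Omega_{1}(m)| = 2N\beta(m)$. Comparing the two expressions for $|\mathcal{A}|$ yields $|\Omega(m)| = 2N\beta(m)/m_{1} = 2|m|\alpha(m)$.

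The composition count and the invariance of $\Omega(m)$ under rotation are routine. The step I expect to require the most care is the bijectivity of the rotation map: one must verify that the inverse, which reassembles a loop in $\Omega(m)$ from a based Dyck path $\pi' \in \Omega_{1}(m)$ together with an arbitrary shift $i$, is well defined for every $i$, the delicate point being that the step condition must survive the wrap-around at the base point --- precisely where the closure $\pi(2N) = \pi(0)$ is used. The one genuinely substantive ingredient is the fact that every loop meets the vertex $1$ exactly $m_{1}$ times regardless of where it is based; this is what balances the two sides of the count and explains the division by $m_{1}$ in $\alpha(m)$.
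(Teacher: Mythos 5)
Your proof is correct, but both halves take a genuinely different route from the paper's. For $|\Omega_{1}(m)|=\beta(m)$ the paper argues by induction on $\ell$: a path in $\Omega_{1}(m_{1},\ldots,m_{\ell})$ is obtained from a path in $\Omega_{1}(m_{2},\ldots,m_{\ell})$ by lifting it one level, closing it off at the vertex $1$, and inserting $m_{1}-1$ copies of the excursion $(1,2)$ into the $m_{2}+1$ slots following the occurrences of the vertex $2$, which yields the recurrence $|\Omega_{1}(m)|=\binom{m_{1}-1+m_{2}}{m_{2}}\,|\Omega_{1}(m_{2},\ldots,m_{\ell})|$; you instead pass through the classical contour bijection with rooted plane trees and count trees layer by layer, distributing $m_{j+1}$ ordered children among $m_{j}$ ordered parents via weak compositions, $\binom{m_{j}+m_{j+1}-1}{m_{j+1}}$ choices per layer. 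Your count is right, and the compatibility of the bijection with specifications (level-$j$ steps of the path corresponding to height-$j$ vertices of the tree) is precisely the refinement needed; note, though, that this is the tree interpretation the paper deliberately relegates to a remark with references, so its induction is self-contained where your argument leans on the cited bijection. For $|\Omega(m)|=2|m|\alpha(m)$ the paper also rotates loops, but organizes the count by orbits of the cyclic group of order $2N$: orbit--stabilizer gives $|\Omega(m)|=\sum_{s}2N/|H_{s}|$, while $|\mathcal{O}_{s}\cap\Omega_{1}(m)|=m_{1}/|H_{s}|$, and the stabilizers cancel upon summing. Your double count of pairs $(\pi,i)$ with $\pi(i)=1$ reaches the same conclusion more economically: by retaining the rotation amount $i$ as a coordinate, the map $(\pi,i)\mapsto(\pi',i)$ is a genuine bijection onto $\Omega_{1}(m)\times\{0,1,\ldots,2N-1\}$ even for periodic loops, so all stabilizer bookkeeping disappears. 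Both arguments ultimately rest on the same key fact, which you establish correctly: every loop in $\Omega(m)$, wherever based, visits the vertex $1$ exactly $m_{1}$ times.
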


\begin{proof} To show the first equality one can proceed by induction in
  $\ell$. For $\ell=1$ and any $m\in\mathbb{N}$ one clearly has
  $\left|\Omega_{1}(m)\right|=1$. Suppose now that $\ell\geq2$ and fix
  $m\in\mathbb{N}^{\ell}$. Denote
  $m'=(m_{2},\ldots,m_{\ell})\in\mathbb{N}^{\ell-1}$.  For any
  $\pi'\in\Omega_{1}(m')$ put
  \[
  \tilde{\pi}=(1,\pi'(0)+1,\pi'(1)+1,\ldots,\pi'(2N')+1,1)
  \]
  where $N'=|m'|=|m|-m_{1}$. The vertex $2$ occurs in $\tilde{\pi}$ exactly
  $(m_{2}+1)$ times. After any such occurrence of $2$ one may insert none or
  several copies of the two-letter chain $(1,2)$.  Do it so while requiring
  that the total number of inserted couples equals $m_{1}-1$. This way one
  generates all Dyck paths from $\Omega_{1}(m)$, and each exactly once. This
  implies the recurrence rule
  \[
  \left|\Omega_{1}(m_{1},m_{2},\ldots,m_{\ell})\right|
  =\binom{m_{1}-1+m_{2}}{m_{2}}\left|\Omega_{1}(m_{2},\ldots,m_{\ell})\right|,
  \]
  thus proving that $\left|\Omega_{1}(m)\right|=\beta(m)$.

  Let us proceed to the second equality. Put $N=|m|$. Consider the cyclic
  group $G=\langle g\rangle$, $g^{2N}=1$. $G$ acts on $\Omega(m)$ according to
  the rule
  \[
  g\cdot\pi
  = (\pi(1),\pi(2),\ldots,\pi(2N),\pi(0)),\ \forall\pi\in\Omega(m).
  \]
  Clearly, $G\cdot\Omega_{1}(m)=\Omega(m)$. Let us write $\Omega(m)$ as a
  disjoint union of orbits,
  \[
  \Omega(m)=\bigcup_{s=1}^{M}\mathcal{O}_{s}.
  \]
  For each orbit choose $\pi_{s}\in\mathcal{O}_{s}\cap\Omega_{1}(m)$.  Let
  $H_{s}\subset G$ be the stabilizer of $\pi_{s}$. Then
  \[
  \left|\Omega(m)\right|=\sum_{s=1}^{M}\frac{2N}{|H_{s}|}\,.
  \]
  Denote further by $G_{s}^{1}$ the subset of $G$ formed by those elements $a$
  obeying $a\cdot\pi_{s}\in\Omega_{1}(m)$ (i.e. the vertex $1$ is still the
  base point). Then $|G_{s}^{1}|=m_{1}$ and
  $\mathcal{O}_{s}\cap\Omega_{1}(m)=G_{s}^{1}\cdot\pi_{s}$. Moreover,
  $G_{s}^{1}\cdot H_{s}=G_{s}^{1}$, i.e. $H_{s}$ acts freely from the right on
  $G_{s}^{1}$, with orbits of this action being in one-to-one correspondence
  with elements of $\mathcal{O}_{s}\cap\Omega_{1}(m)$. Hence
  $|\mathcal{O}_{s}\cap\Omega_{1}(m)|=|G_{s}^{1}|/|H_{s}|$ and
  \[
  \left|\Omega_{1}(m)\right|
  =\sum_{s=1}^{M}\left|\mathcal{O}_{s}\cap\Omega_{1}(m)\right|
  =\sum_{s=1}^{M}\frac{m_{1}}{|H_{s}|}\,.
  \]
  This shows that $|\Omega(m)|=(2N/m_{1})|\Omega_{1}(m)|$. In view of the
  first equality of the proposition and (\ref{eq:alpha_def}), the proof is
  complete.
\end{proof}

\begin{lemma}
  \label{lem:sum_alpha}
  For $N\in\mathbb{N}$,
  \begin{displaymath}
    \sum_{m\in\mathcal{M}(N)}\alpha(m)=\frac{1}{2N}\binom{2N}{N}.
  \end{displaymath}
\end{lemma}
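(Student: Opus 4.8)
The plan is to reduce the sum over $\alpha(m)$ to a count of lattice loops and then evaluate that count by a direct bijection. By the second identity of Lemma~\ref{lem:count_loops}, for every $m\in\mathcal{M}(N)$ one has $|m|=N$ and hence $|\Omega(m)|=2N\,\alpha(m)$. Summing,
\[
\sum_{m\in\mathcal{M}(N)}\alpha(m)=\frac{1}{2N}\sum_{m\in\mathcal{M}(N)}|\Omega(m)|,
\]
so it suffices to show that the total number of loops $\sum_{m\in\mathcal{M}(N)}|\Omega(m)|$ equals $\binom{2N}{N}$.

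First I would establish that the sets $\Omega(m)$, $m\in\mathcal{M}(N)$, are pairwise disjoint and describe their union cleanly. A loop $\pi\in\Omega(m)$ lives in $\Lambda_{d(m)+1}$ and, because each edge $(j,j+1)$ is traversed $2m_{j}\ge2$ times, it visits exactly the contiguous block of vertices $\{1,2,\ldots,d(m)+1\}$, so its least visited vertex is $1$. Conversely, any loop $\pi$ of length $2N$ in $\mathbb{Z}$ whose minimal visited vertex equals $1$ belongs to exactly one of these sets: if $2m_{j}$ denotes the number of traversals of the edge $(j,j+1)$, then $m_{j}\ge1$ for each edge below the maximum (the loop must cross it to reach higher vertices and return), while every one of the $2N$ steps crosses precisely one edge, so $2|m|=2N$ and $m\in\mathcal{M}(N)$. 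Thus $\bigcup_{m\in\mathcal{M}(N)}\Omega(m)$ is exactly the set of loops of length $2N$ in $\mathbb{Z}$ with minimal vertex $1$.

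It then remains to count these loops, which I would do by a bijection with step sequences. A loop is determined by its base point $\pi(0)$ together with the increments $\epsilon_{i}=\pi(i)-\pi(i-1)\in\{+1,-1\}$, $i=1,\ldots,2N$. The loop condition $\pi(0)=\pi(2N)$ forces $\sum_{i}\epsilon_{i}=0$, i.e. exactly $N$ up-steps and $N$ down-steps, yielding $\binom{2N}{N}$ admissible increment sequences. For a fixed increment sequence the partial sums are determined, and there is a unique choice of base point $\pi(0)$ making $\min_{j}\pi(j)=1$. This is a bijection between increment sequences and loops of length $2N$ with minimal vertex $1$, so $\sum_{m\in\mathcal{M}(N)}|\Omega(m)|=\binom{2N}{N}$, and the claimed identity follows.

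The main obstacle I expect is the bookkeeping in the union/disjointness step: one must check carefully that the normalization ``minimal vertex equals $1$,'' combined with the contiguity of the visited vertices, matches precisely the defining conditions of the $\Omega(m)$ (each edge used $2m_{j}$ times with all $m_{j}\ge1$), so that no loop is double-counted and none is omitted. Once this identification is secured, the counting bijection is routine.
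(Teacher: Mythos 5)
Your proof is correct and follows essentially the same route as the paper: both reduce the sum to $\frac{1}{2N}\sum_{m\in\mathcal{M}(N)}|\Omega(m)|$ via Lemma~\ref{lem:count_loops} and then count loops of length $2N$ in $\mathbb{Z}$ through their $\pm1$ increment sequences with $N$ steps of each sign, giving $\binom{2N}{N}$. The only difference is cosmetic: you normalize each loop by requiring its minimal visited vertex to be $1$, where the paper phrases the same thing as counting loops modulo translation.
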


\begin{proof}
  According to Lemma~\ref{lem:count_loops}, the sum
  \[
  2N\sum_{m\in\mathcal{M}(N)}\alpha(m)
  =\sum_{m\in\mathcal{M}(N)}\left|\Omega(m)\right|
  \]
  equals the number of equivalence classes of loops of length $2N$ in the
  one-dimensional lattice $\mathbb{Z}$ assuming that loops differing by
  translations are identified. These classes are generated by making $2N$
  choices, in all possible ways, each time choosing either the sign plus or
  minus (moving to the right or to the left on the lattice) while the total
  number of occurrences of each sign being equal to $N$.
\end{proof}

\begin{remark}
  The sum $\sum_{m\in\mathcal{M}(N)}\beta(m)$ can readily be evaluated, too,
  since this is nothing but the total number of Dyck paths of length $2N$. As
  is well known, this number equals the Catalan number
  \begin{displaymath}
    C_N := \frac{1}{N+1}\binom{2N}{N}
  \end{displaymath}
  (see, for instance \cite{Deutsch}).
\end{remark}

For $m\in\mathbb{N}^{\ell}$ let
\[
\binom{|m|}{m} := \frac{|m|!}{m_{1}!\, m_{2}!\,\cdots\, m_{\ell}!}\,.
\]

\begin{lemma}
  \label{lem:estim_alpha}
  For every $\ell\in\mathbb{N}$ and $m\in\mathbb{N}^{\ell}$,
  \[
  \alpha(m)\leq\frac{1}{|m|}\binom{|m|}{m},
  \]
  and equality holds if and only if $\ell=1$ or $2$.
\end{lemma}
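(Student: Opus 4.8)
The plan is to prove the inequality $\alpha(m) \leq \frac{1}{|m|}\binom{|m|}{m}$ by finding a natural injection or surjection relating the objects counted by these two quantities. Recall from Lemma~\ref{lem:count_loops} that $2|m|\,\alpha(m) = |\Omega(m)|$, the number of loops of length $2|m|$ in $\Lambda_{\ell+1}$ with the prescribed edge-multiplicities $2m_j$. On the other hand, $\binom{|m|}{m}$ is the multinomial coefficient counting the number of ways to arrange a word with $m_j$ copies of the letter $j$, for $j=1,\ldots,\ell$. So the natural strategy is to show $|\Omega(m)| \leq 2\binom{|m|}{m}$, or equivalently to exhibit a map from $\Omega(m)$ into a set of cardinality $2\binom{|m|}{m}$ that is injective, or at most $2$-to-$1$.

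First I would set $N=|m|$ and encode a loop $\pi \in \Omega(m)$ by the sequence of its $2N$ steps, each step being an up-move $(+1)$ or down-move $(-1)$ between adjacent vertices. Since the loop traverses edge $(j,j+1)$ exactly $2m_j$ times, it uses exactly $m_j$ up-steps and $m_j$ down-steps across that edge. The idea is to label each step by the index $j$ of the edge it crosses; reading the loop yields a word of length $2N$ in which the letter $j$ appears exactly $2m_j$ times. However, there are two steps (one up, one down) per edge-crossing, so to land inside the multinomial count $\binom{|m|}{m}$ I would record only, say, the sequence of edge-labels of the up-steps together with the base point. Each up-step crosses some edge $(j,j+1)$, and there are exactly $m_j$ such up-steps, giving a word of length $N$ with $m_j$ occurrences of $j$; the number of such words is precisely $\binom{N}{m} = \binom{|m|}{m}$. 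The remaining factor of $2$ would be absorbed by the choice encoding whether a specified reference step is up or down, or by the freedom in the base point after translation-identification.

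The key step is to verify that the loop is \emph{recovered} (up to a bounded ambiguity) from this reduced data, so that the map is injective enough. Here I would argue that on a one-dimensional lattice a loop is rigidly determined by the interleaving pattern of its up- and down-steps once we fix where it starts: given the ordered sequence of edge-labels of all $2N$ steps, the loop is reconstructed uniquely. The content of the bound is therefore that the full step-word (length $2N$, with each label $j$ appearing $2m_j$ times, total count $\binom{2N}{2m}$ words) maps down to the reduced up-step word, and that the structural constraint of being a genuine closed lattice walk cuts the count down exactly to $2\binom{N}{m}$ with equality exactly when the walk has no freedom to interleave nontrivially.

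The hard part will be pinning down the equality case and the precise constant. The claim is that equality holds if and only if $\ell = 1$ or $\ell = 2$. For $\ell=1$ one has $\alpha(m)=1$ and $\frac{1}{m}\binom{m}{m}=\frac{1}{m}$, so in fact I must re-examine the normalization — the comparison is between $\beta(m)/m_1$ and $\frac{1}{|m|}\binom{|m|}{m}$, and for $\ell=1$ both sides equal $\tfrac{1}{m_1}$, while for $\ell=2$ a direct computation using $\beta(m)=\binom{m_1+m_2-1}{m_2}$ should be checked to give equality. For $\ell \geq 3$ the product structure of $\beta(m)$ in~(\ref{eq:alpha_def}) introduces strict loss relative to the single multinomial coefficient, because the binomial factors $\binom{m_j+m_{j+1}-1}{m_{j+1}}$ multiply to something strictly smaller than the corresponding multinomial rearrangement. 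Thus the cleanest route may be to prove the inequality \emph{algebraically} by induction on $\ell$, comparing $\alpha(m_1,\ldots,m_\ell)$ with $\alpha(m_1,\ldots,m_{\ell-1})$ and tracking the binomial ratios, rather than via the combinatorial injection; the induction base covers $\ell=1,2$ (equality) and the inductive step shows the strict inequality is strict as soon as a third level appears. I would carry both viewpoints in parallel, using the combinatorial picture to guess the equality case and the inductive binomial estimate to make it rigorous.
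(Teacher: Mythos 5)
Your primary, combinatorial route contains a step that is false. The map sending a loop $\pi\in\Omega(m)$ to the word of edge labels of its up-steps need not be at most $2$-to-$1$. Take $\ell=2$, $m=(1,1)$, so $N=|m|=2$ and, by Lemma~\ref{lem:count_loops}, $|\Omega(m)|=2N\alpha(m)=4$; the four loops are $(1,2,3,2,1)$, $(2,1,2,3,2)$, $(3,2,1,2,3)$ and $(2,3,2,1,2)$. The first three all produce the up-step word $(1,2)$ and only the last produces $(2,1)$: the fibers have sizes $3$ and $1$. So although $|\Omega(m)|=2\binom{N}{m}$ does hold in this case, it is an aggregate identity, not the shadow of a uniformly at most $2$-to-$1$ map, and no bookkeeping with a ``reference step'' can repair a fiber of size $3$. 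Keeping the base point in the data, as you also suggest, does make the map injective --- between consecutive up-steps a loop can only descend monotonically, so the pair (base point, up-step word) reconstructs the loop --- but then the target has cardinality $(\ell+1)\binom{N}{m}$, which is too large; the number of admissible base points varies with the word (it is $3$ for $(1,2)$ and $1$ for $(2,1)$ above), so it cannot be traded for a uniform factor $2$. For $\ell\geq3$ matters are worse, since then $|\Omega(m)|<2\binom{N}{m}$ strictly and no exact fiber count is available. This thread cannot be completed as described.

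Your fallback --- induction on $\ell$ with base cases $\ell=1,2$ --- is essentially the paper's strategy, but as written it proves nothing, because the entire content of the inductive step is deferred to ``tracking the binomial ratios''. For the record, your last-coordinate version does close, and is even slightly more direct than the paper's: with $N=|m|$ and $m'=(m_1,\dots,m_{\ell-1})$ one has $\alpha(m)=\alpha(m')\binom{m_{\ell-1}+m_\ell-1}{m_\ell}$ from (\ref{eq:alpha_def}) and $\binom{N}{m}=\binom{N-m_\ell}{m'}\binom{N}{m_\ell}$, so the induction hypothesis $\alpha(m')\leq\frac{1}{N-m_\ell}\binom{N-m_\ell}{m'}$ together with the identity $\frac{1}{N}\binom{N}{m_\ell}=\frac{1}{N-m_\ell}\binom{N-1}{m_\ell}$ reduces the claim to $\binom{m_{\ell-1}+m_\ell-1}{m_\ell}\leq\binom{N-1}{m_\ell}$. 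This holds since $m_{\ell-1}+m_\ell-1\leq N-1$, with strict inequality exactly when $\ell\geq3$ (then $m_{\ell-1}+m_\ell-1\leq N-2$, and the binomial coefficient is strictly increasing in its upper index because $m_\ell\geq1$); this also settles the equality cases $\ell=1,2$. The paper argues differently in detail: it normalizes $\gamma(m)=\alpha(m)/\binom{|m|}{m}$, shows $\gamma(m_1,m_2,m_3,\dots)<\gamma(m_1-1,m_2+1,m_3,\dots)$ for $\ell\geq3$, $m_1>1$, and then strips the \emph{first} coordinate once $m_1=1$. Either variant is fine; what your proposal lacks is this actual verification, which is the whole substance of the lemma.
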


\begin{proof} Put $\gamma(m)=\alpha(m)/\binom{|m|}{m}$. To show that
  $\gamma(m)\leq1/|m|$ one can proceed by induction in $\ell$. It is immediate
  to check the equality to be true for $\ell=1$ and $2$. For $\ell\geq3$ and
  $m_{1}>1$ one readily verifies that
\[
\gamma(m_{1},m_{2},m_{3},\ldots,m_{\ell})
< \gamma(m_{1}-1,m_{2}+1,m_{3},\ldots,m_{\ell}).
\]
Furthermore, if $\ell\geq3$, $m_{1}=1$ and the inequality is known
to be valid for $\ell-1$, one has
\[
\gamma(m_{1},m_{2},m_{3},\ldots,m_{\ell})
=\frac{m_{2}\,\gamma(m_{2},m_{3},\ldots,m_{\ell})}
{1+m_{2}+m_{3}+\cdots+m_{\ell}}<\frac{1}{|m|}\,.
\]
The lemma follows. \end{proof}

\begin{proof}[Proof of Proposition~\ref{prop:logF}] The coefficients of the
  power series expansion at the origin of the function
  $\log\mathfrak{F}(t_{1},\ldots,t_{n})$ can be calculated in the ring of
  formal power series. As shown in \cite{StampachStovicek2}, one has
\[
\mathfrak{F}(t_{1},\ldots,t_{n})=\det(I+T)
\]
where
\begin{equation}
\label{eq:T}
T=\left[\begin{array}{cccccc}
0 & t_{1}\\
t_{2} & 0 & t_{2}\\
 & \ddots & \ddots & \ddots\\
 &  & \ddots & \ddots & \ddots\\
 &  &  & t_{n-1} & 0 & t_{n-1}\\
 &  &  &  & t_{n} & 0
\end{array}\right].
\end{equation}
Since $\det\exp(A)=\exp(\Tr A)$ and so $\log\det(I+T)=\Tr\log(I+T)$,
and noticing that $\Tr T^{2k+1}=0$, one gets
\[
\log\mathfrak{F}\left(t_{1},\ldots,t_{n}\right)=\Tr\log(I+T)
=-\sum_{N=1}^{\infty}\frac{1}{2N}\,\Tr T^{2N}.
\]

From (\ref{eq:T}) one deduces that
\begin{equation}
\label{eq:T_power_2N}
\Tr T^{2N}=\sum_{\pi\in\mathcal{L}(N)}\,\prod_{j=0}^{2N-1}t_{\pi(j)}
\end{equation}
where $\mathcal{L}(N)$ stands for the set of all loops of length $2N$ in
$\Lambda_{n}$. Let
\begin{displaymath}
  k=\min\{\pi(j);\ 1\leq j\leq2N\}
\end{displaymath}
and put $\tilde{\pi}(j)=\pi(j)-k+1$ for $0\leq{}j\leq{}2N$. Then
$\tilde{\pi}\in\Omega(m)$ for certain (unambiguous) multiindex
$m\in\mathcal{M}(N)$ of length $d(m)\leq{}n-k$. Conversely, given
$m\in\mathcal{M}(N)$ of length $d(m)\leq{}n-1$ and $k$,
$1\leq{}k\leq{}n-d(m)$, one defines $\pi\in\mathcal{L}(N)$ by
$\pi(j)=k+\tilde{\pi}(j)-1$, $0\leq{}j\leq2N$. Hence the RHS of
(\ref{eq:T_power_2N}) equals
\[
\sum_{\substack{m\in\mathcal{M}(N)\\
d(m)<n}}\,
\text{ }\sum_{k=1}^{n-d(m)}\,\left|\Omega(m)\right|\,
\prod_{j=1}^{d(m)}\left(t_{k+j-1}t_{k+j}\right)^{m_{j}}.
\]
To verify (\ref{eq:logFt}) it suffices to apply Lemma~\ref{lem:count_loops}.

Suppose now $x$ is a complex sequence. If $\sum_{k}|x_{k}x_{k+1}|<\log2$
one has, by (\ref{eq:estim_Fx_exp}), $|\mathfrak{F}(x)-1|<1$ and
so $\log\mathfrak{F}(x)$ is well defined. Moreover, according to
(\ref{eq:F_fin_lim}),
\[
\log\mathfrak{F}(x)=\lim_{n\to\infty}\,\log\mathfrak{F}(x_{1},\ldots,x_{n}).
\]
If $\sum_{k}|x_{k}x_{k+1}|<1$ then the RHS of (\ref{eq:logFt}) admits the
limit procedure, too, as demonstrated by the simple estimate (replacing
$t_{j}$s by $x_{j}$s)
\begin{eqnarray*}
|\mbox{the\ RHS\ of\ }(\ref{eq:logFt})| & \leq & \sum_{N=1}^{\infty}\,
\left(\max_{m\in\mathcal{M}(N)}\frac{\alpha(m)}{\binom{N}{m}}\right)
\sum_{m\in\mathcal{M}(N)}\,\binom{N}{m}\text{ }
\sum_{k=1}^{\infty}\,\prod_{j=1}^{d(m)}|x_{k+j-1}x_{k+j}|^{m_{j}}\\
\noalign{\medskip} & \leq & \sum_{N=1}^{\infty}\frac{1}{N}
\left(\sum_{k=1}^{\infty}|x_{k}x_{k+1}|\right)^{\!\! N}\,
=\,-\log\!\left(1-\sum_{k=1}^{\infty}|x_{k}x_{k+1}|\right)\!.
\end{eqnarray*}
Here we have used Lemma~\ref{lem:estim_alpha}.
\end{proof}

\section{Factorization in the spectral parameter}

In this section, we introduce a regularized characteristic function
of a Jacobi matrix and show that it can be expressed as a Hadamard
infinite product.

Let $\lambda=\{\lambda_{n}\}_{n=1}^{\infty}$, $\{w_{n}\}_{n=1}^{\infty}$
be real sequences such that $\lim_{n\rightarrow\infty}\lambda_{n}=+\infty$
and $w_{n}\neq0$, $\forall n$. In addition, without loss of generality,
$\{\lambda_{n}\}_{n=1}^{\infty}$ is assumed to be positive. Moreover,
suppose that
\begin{equation}
\label{eq:assum_sum}
\sum_{n=1}^{\infty}\frac{w_{n}^{\,2}}{\lambda_{n}\lambda_{n+1}}
<\infty\quad\mbox{ and }\quad\sum_{n=1}^{\infty}
\frac{1}{\lambda_{n}^{\,2}}<\infty.
\end{equation}

Under these assumptions, by Theorem~\ref{thm:basic_charfce}, $J$
defined in (\ref{eq:J}) may be regarded as a self-adjoint operator
on $\ell^{2}(\mathbb{N})$. Moreover, $\der(\lambda)$ is clearly
empty and the characteristic function $F_{J}(z)$ is meromorphic on
$\complex$ with possible poles lying in the range of $\lambda$.
To remove the poles let us define the function
\[
\Phi_{\lambda}(z)
:=\prod_{n=1}^{\infty}\left(1-\frac{z}{\lambda_{n}}\right)e^{z/\lambda_{n}}.
\]
Since $\sum_{n}\lambda_{n}^{\,-2}<\infty$, $\Phi_{\lambda}$ is a
well defined entire function. Moreover, $\Phi_{\lambda}$ has zeros
at the points $z=\lambda_{n}$, with multiplicity being equal to the
number of repetitions of $\lambda_{n}$ in the sequence $\lambda$,
and no zeros otherwise.

Finally we define (see (\ref{eq:F_J_z})) 
\[
H_{J}(z):=\Phi_{\lambda}(z)F_{J}(z),
\]
and call $H_{J}(z)$ the regularized characteristic function of the Jacobi
operator $J$. Note that for $\varepsilon\geq0$, %
$F_{J+\varepsilon I}(z)=F_{J}(z-\varepsilon)$ and so
\begin{equation}
\label{eq:H_Jeps}
H_{J+\varepsilon I}(z)=H_{J}(z-\varepsilon)
\Phi_{\lambda}(-\varepsilon)^{-1}\exp\!\left(-z\sum_{n=1}^{\infty}
\frac{\varepsilon}{\lambda_{n}(\lambda_{n}+\varepsilon)}\right)\!.
\end{equation}
According to Theorem~\ref{thm:basic_charfce}, the spectrum of $J$
is discrete, simple and real. Moreover,
\[
\spec(J)=\spec_{p}(J)=H_{J}^{\,-1}(\{0\}).
\]

As is well known, the determinant of an operator $I+A$ on a Hilbert
space can be defined provided $A$ belongs to the trace class. The
definition, in a modified form, can be extended to other Schatten
classes $\sI_{p}$ as well, in particular to Hilbert-Schmidt operators;
see \cite{Simon} for a detailed survey of the theory. Let us denote,
as usual, the trace class and the Hilbert-Schmidt class by $\sI_{1}$
and $\sI_{2}$, respectively. If $A\in\sI_{2}$ then
\[
(I+A)\exp(-A)-I\in\sI_{1},
\]
and one defines
\[
\det_{2}(I+A):=\det\left((I+A)\exp(-A)\right).
\]

We shall need the following formulas \cite[Chp.~9]{Simon}. For $A,B\in\sI_{2}$
one has
\begin{equation}
\label{eq:det2_AB}
\det_{2}(I+A+B+AB)=\det_{2}(I+A)\det_{2}(I+B)\,\exp\left(-\Tr(AB)\right).
\end{equation}
A factorization formula holds for $A\in\sI_{2}$ and $z\in\complex$,
\begin{equation}
\label{eq:id_product}
\det_{2}(I+zA)=\prod_{n=1}^{N(A)}\left(1+z\mu_{n}(A)\right)
\exp\left(-z\mu_{n}(A)\right),
\end{equation}
where $\mu_{n}(A)$ are all (nonzero) eigenvalues of $A$ counted up to their
algebraic multiplicity (see Theorem 9.2 in \cite{Simon} and also Theorem 1.1
ibidem introducing the algebraic multiplicity of a nonzero eigenvalue of a
compact operator). In particular, $I+zA$ is invertible if and only if
$\det_{2}(I+zA)\neq0$. Moreover, the Plemejl-Smithies formula tells us that
for $A\in\sI_{2}$,
\begin{equation}
\label{eq:Plemejl-Smithies_series}
\det_{2}(I+zA)=\sum_{m=0}^{\infty}a_{m}(A)\,\frac{z^{m}}{m!}\,,
\end{equation}
where
\begin{equation}
\label{eq:Plemejl-Smithies_am}
a_{m}(A) = \det\!\begin{bmatrix} \ 0 & m-1 & 0 & \dots & 0 & 0\ \\
\ \Tr A^{2} & 0 & m-2 & \dots & 0 & 0\ \\
\ \Tr A^{3} & \Tr A^{2} & 0 & \dots & 0 & 0\ \\
\ \vdots & \vdots & \vdots & \ddots & \vdots & \vdots\ \\
\ \Tr A^{m-1} & \Tr A^{m-2} & \Tr A^{m-3} & \dots & 0 & 1\ \\
\ \Tr A^{m} & \Tr A^{m-1} & \Tr A^{m-2} & \dots & \Tr A^{2} & 0\ 
\end{bmatrix}
\end{equation}
for $m\geq1$, and $a_{0}(A)=1$ \cite[Thm.~5.4]{Simon}. Finally,
there exists a constant $C_{2}$ such that for all $A,B\in\sI_{2}$,
\begin{equation}
\label{eq:det2A-det2B_estim}
\left|\det_{2}(I+A)-\det_{2}(I+B)\right|\leq\|A-B\|_{2}
\,\exp\!\left(C_{2}(\|A\|_{2}+\|B\|_{2}+1)^{2}\right),
\end{equation}
where $\|\cdot\|_{2}$ stands for the Hilbert-Schmidt norm.

We write the Jacobi matrix in the form
\[
J=L+W+W^{*}
\]
where $L$ is a diagonal matrix while $W$ is lower triangular. By
assumption (\ref{eq:assum_sum}), the operators $L^{-1}$ and
\begin{equation}
\label{eq:K_def}
K:=L^{-1/2}(W+W^{*})L^{-1/2}
\end{equation}
are Hilbert-Schmidt. Hence for every $z\in\complex$, the operator
$L^{-1/2}(W+W^{*}-z)L^{-1/2}$ belongs to the Hilbert-Schmidt class. 

\begin{lemma} \label{lem:HJ_det2} For every $z\in\complex$, 
\[
H_{J}(z)=\det_{2}\left(I+L^{-1/2}(W+W^{*}-z)L^{-1/2}\right).
\]
In particular,
\[
H_{J}(0)=F_{J}(0)=\det_{2}(I+K).
\]
\end{lemma}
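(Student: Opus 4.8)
The plan is to reduce the infinite-dimensional identity to a finite-dimensional determinant identity for the truncations $J_{n}$ and then pass to the limit $n\to\infty$. Throughout, fix $z\in\complex$ and set $A:=L^{-1/2}(W+W^{*}-z)L^{-1/2}$, which lies in $\sI_{2}$ thanks to (\ref{eq:assum_sum}) and (\ref{eq:K_def}). Write $L_{n},W_{n}$ for the $n\times n$ truncations, and put $A_{n}:=L_{n}^{-1/2}(W_{n}+W_{n}^{*}-zI_{n})L_{n}^{-1/2}$. Since $L$ is diagonal, $A_{n}$ coincides with the compression $P_{n}AP_{n}$, where $P_{n}$ is the orthogonal projection onto the span of the first $n$ basis vectors.

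First, at the finite level, I would factor $J_{n}-zI_{n}=L_{n}^{1/2}(I_{n}+A_{n})L_{n}^{1/2}$ (legitimate since $\lambda_{k}>0$), whence $\det(J_{n}-zI_{n})=\det(L_{n})\det(I_{n}+A_{n})$. For a finite matrix the modified determinant satisfies $\det_{2}(I_{n}+A_{n})=\det(I_{n}+A_{n})\exp(-\Tr A_{n})$, and the trace is immediate to read off: because $W_{n}+W_{n}^{*}$ has vanishing diagonal, cyclicity of the trace gives $\Tr A_{n}=\Tr L_{n}^{-1}(W_{n}+W_{n}^{*}-zI_{n})=-z\sum_{k=1}^{n}\lambda_{k}^{-1}$. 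Comparing with the closed form (\ref{eq:char_pol_general}) for $\det(J_{n}-zI_{n})$ and cancelling the common factor $\det(L_{n})=\prod_{k=1}^{n}\lambda_{k}$, the surviving powers of $\lambda_{k}$ recombine with $\exp(-\Tr A_{n})$ to produce exactly the truncated Weierstrass factors, yielding the clean identity
\[
\prod_{k=1}^{n}\left(1-\frac{z}{\lambda_{k}}\right)e^{z/\lambda_{k}}\,
\mathfrak{F}\!\left(\frac{\gamma_{1}^{\,2}}{\lambda_{1}-z},\dots,
\frac{\gamma_{n}^{\,2}}{\lambda_{n}-z}\right)=\det_{2}(I_{n}+A_{n}).
\]

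Next I would let $n\to\infty$. On the left, the product converges to $\Phi_{\lambda}(z)$ by its very definition (using $\sum_{n}\lambda_{n}^{-2}<\infty$), while the $\mathfrak{F}$ factor converges to $F_{J}(z)$ by (\ref{eq:F_fin_lim}); hence the left-hand side tends to $\Phi_{\lambda}(z)F_{J}(z)=H_{J}(z)$. On the right, since $A_{n}=P_{n}AP_{n}$ is finite rank with the same nonzero spectrum as the matrix $A_{n}$, the product formula (\ref{eq:id_product}) (equivalently, the fact that the Plemelj--Smithies coefficients in (\ref{eq:Plemejl-Smithies_series}) depend only on $\Tr A^{k}$, $k\geq2$, which agree for $P_{n}AP_{n}$ and $A_{n}$) identifies the finite-dimensional $\det_{2}(I_{n}+A_{n})$ with the operator-theoretic $\det_{2}(I+P_{n}AP_{n})$. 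Because $A\in\sI_{2}$ one has $\|A-P_{n}AP_{n}\|_{2}^{2}=\sum_{\max(j,k)>n}|A_{jk}|^{2}\to0$, so the Lipschitz-type bound (\ref{eq:det2A-det2B_estim}) gives $\det_{2}(I+P_{n}AP_{n})\to\det_{2}(I+A)$. This establishes $H_{J}(z)=\det_{2}(I+A)$ for every $z$; setting $z=0$ gives $A=K$ and $\Phi_{\lambda}(0)=1$, hence $H_{J}(0)=F_{J}(0)=\det_{2}(I+K)$.

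The trace computation and the recombination of factors are purely algebraic and pose no real difficulty. The one point demanding care is the passage to the limit on the right-hand side: one must verify both that the finite determinant $\det_{2}(I_{n}+A_{n})$ genuinely agrees with the operator $\det_{2}$ of the compression, and that $A_{n}\to A$ in Hilbert--Schmidt norm. The latter is precisely where the summability hypotheses (\ref{eq:assum_sum}) are indispensable, since they are what place $A$ in $\sI_{2}$ in the first place; once this membership is secured, the continuity estimate (\ref{eq:det2A-det2B_estim}) does the rest.
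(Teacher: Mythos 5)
Your proof is correct and follows essentially the same route as the paper's: verify the identity for the finite truncations via (\ref{eq:char_pol_general}) and the elementary relation $\det_{2}=\det\cdot\exp(-\Tr)$, then pass to the limit using (\ref{eq:F_fin_lim}) on the $\mathfrak{F}$ side and the Hilbert--Schmidt continuity estimate (\ref{eq:det2A-det2B_estim}) together with $\|P_{n}AP_{n}-A\|_{2}\to0$ on the determinant side. The only difference is presentational: you make explicit the trace computation and the identification of the $n\times n$ $\det_{2}$ with the operator-theoretic $\det_{2}$ of the compression, steps the paper performs implicitly.
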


\begin{proof} We first verify the formula for the truncated finite rank
  operator $J_{N}=P_{N}JP_{N}$, where $P_{N}$ is the orthogonal projection
  onto the subspace spanned by the first $N$ vectors of the canonical basis in
  $\ell^{2}(\mathbb{N})$. Using formula (\ref{eq:char_pol_general}) one
  derives
\begin{eqnarray*}
  &  & \hskip-1em\det\!\left[(I+P_{N}L^{-1/2}(W+W^{*}-z)L^{-1/2}P_{N})
   \exp\!\left(-P_{N}L^{-1/2}(W+W^{*}-z)L^{-1/2}P_{N}\right)\right]\\
 &  & \hskip-1em=\,\det(P_{N}L^{-1}P_{N})
 \det(J_{N}-zI_{N})\,\exp\!\left(z\Tr(P_{N}L^{-1}P_{N})\right)\\
 &  & \hskip-1em=\left(\prod_{n=1}^{N}\left(1-\frac{z}{\lambda_{n}}\right)
   e^{z/\lambda_{n}}\right)\mathfrak{F}\!\left(\left\{
     \frac{\gamma_{n}^{\,2}}{\lambda_{n}-z}\right\} _{n=1}^{N}\right)\!.
\end{eqnarray*}
Sending $N$ to infinity it is clear, by (\ref{eq:F_fin_lim}) and
(\ref{eq:assum_sum}), that the RHS tends to $H_{J}(z)$. Moreover, one knows
that $\det_{2}(I+A)$ is continuous in $A$ in the Hilbert-Schmidt norm, as it
follows from (\ref{eq:det2A-det2B_estim}). Thus to complete the proof it
suffices to notice that $A\in\sI_{2}$ implies
$\|P_{N}AP_{N}-A\|_{2}\rightarrow0$ as $N\to\infty$.
\end{proof}

We intend to apply the Hadamard factorization theorem to $H_{J}(z)$; see, for
example, \cite[Thm.~XI.3.4]{Conway}. For simplicity we assume that
$F_{J}(0)\neq0$ and so $J$ is invertible. Otherwise one could replace $J$ by
$J+\varepsilon I$ for some $\varepsilon>0$ and make use of (\ref{eq:H_Jeps}).

As already mentioned, the operator $K$ defined in (\ref{eq:K_def})
is Hilbert-Schmidt. At the same time, this is a Jacobi matrix operator
with zero diagonal admitting application of Theorem~\ref{thm:basic_charfce}.
One readily finds that
\[
F_{K}(z)=\mathfrak{F}\!
\left(\left\{
    -\frac{\gamma_{n}^{\,2}}{z\lambda_{n}}\right\} _{n=1}^{\infty}\right)\!.
\]
Hence $F_{K}(-1)=F_{J}(0)$, and $J$ is invertible if and only if the same is
true for $(I+K)$. In that case, again by Theorem~\ref{thm:basic_charfce}, $0$
belongs to the resolvent set of $J$, and
\begin{equation}
\label{eq:Jinv_IplusKinv}
J^{-1}=L^{-1/2}(I+K)^{-1}L^{-1/2}.
\end{equation}

\begin{lemma} \label{lem:det_eq} If $J$ is invertible then $J^{-1}$
is a Hilbert-Schmidt operator and
\begin{equation}
\label{eq:det_eq}
\det_{2}\left(I-z(I+K)^{-1}L^{-1}\right)=\det_{2}\left(I-zJ^{-1}\right)
\end{equation}
for all $z\in\complex$. \end{lemma}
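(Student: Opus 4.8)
The plan is to split the statement into its two assertions — that $J^{-1}\in\sI_{2}$ and that the two regularized determinants coincide — and to base the determinant identity entirely on the product representation \eqref{eq:id_product}, which expresses $\det_{2}(I-zA)$ solely in terms of the nonzero eigenvalues of $A$.

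First I would establish that $J^{-1}$ is Hilbert-Schmidt. Starting from \eqref{eq:Jinv_IplusKinv} and inserting $(I+K)^{-1}=I-(I+K)^{-1}K$, one obtains
\[
J^{-1}=L^{-1}-L^{-1/2}(I+K)^{-1}K\,L^{-1/2}.
\]
Here $L^{-1}\in\sI_{2}$ by \eqref{eq:assum_sum}, while $K\in\sI_{2}$ and $(I+K)^{-1}$ is bounded, so $(I+K)^{-1}K\in\sI_{2}$. Since the positive sequence $\lambda$ tends to $+\infty$ and is therefore bounded away from $0$, the operator $L^{-1/2}$ is bounded, and the second term is a product of bounded factors sandwiching a Hilbert-Schmidt one; hence it too lies in $\sI_{2}$. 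Thus $J^{-1}$ is a difference of two Hilbert-Schmidt operators.

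For the determinant identity I would introduce the bounded operators $X=(I+K)^{-1}L^{-1/2}$ and $Y=L^{-1/2}$, so that, by \eqref{eq:Jinv_IplusKinv},
\[
XY=(I+K)^{-1}L^{-1},\qquad YX=L^{-1/2}(I+K)^{-1}L^{-1/2}=J^{-1}.
\]
Both products lie in $\sI_{2}$ (the first because $L^{-1}\in\sI_{2}$, the second by the previous paragraph), so \eqref{eq:id_product} applies to each. The remaining input is the classical fact that $XY$ and $YX$ share the same nonzero eigenvalues together with their algebraic multiplicities; for $\lambda\neq0$ this is visible from the resolvent identity $(\lambda-XY)^{-1}=\lambda^{-1}\bigl(I+X(\lambda-YX)^{-1}Y\bigr)$, which also matches the corresponding Riesz projections. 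Feeding this coincidence of eigenvalue lists into \eqref{eq:id_product} yields $\det_{2}(I-zXY)=\det_{2}(I-zYX)$ as an identity of entire functions of $z$, which is precisely \eqref{eq:det_eq}.

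The main obstacle is this eigenvalue-coincidence step. Because $\sum_{n}\lambda_{n}^{-1}$ may diverge, neither $X$ nor $Y$ need be Hilbert-Schmidt, so the argument cannot rely on compactness of the individual factors; one must instead use that the products $XY$ and $YX$ are compact, whence their nonzero spectra are discrete and purely eigenvalue, and then verify that the algebraic multiplicities agree so that the factors in \eqref{eq:id_product} can be paired off termwise. Once this is in place the rest is routine, and the clause ``for all $z\in\complex$'' is automatic since both sides are entire.
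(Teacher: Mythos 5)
Your proposal is correct, but it establishes the determinant identity by a genuinely different mechanism than the paper. The paper never touches eigenvalues: it observes that $L^{-1/2}\in\sI_{4}$, so $J^{-1}=L^{-1/2}(I+K)^{-1}L^{-1/2}\in\sI_{2}$ by the ideal property $\sI_{p}\sI_{q}\subset\sI_{r}$, $r^{-1}=p^{-1}+q^{-1}$, and then uses cyclicity of the trace ($\Tr(AB)=\Tr(BA)$ for $A\in\sI_{p}$, $B\in\sI_{q}$, $p^{-1}+q^{-1}=1$) to get
\[
\Tr\left((I+K)^{-1}L^{-1}\right)^{k}
=\Tr\left(L^{-1/2}(I+K)^{-1}L^{-1/2}\right)^{k}
=\Tr\left(J^{-k}\right),\qquad k\geq2,
\]
after which (\ref{eq:det_eq}) follows from the Plemelj--Smithies formula (\ref{eq:Plemejl-Smithies_series})--(\ref{eq:Plemejl-Smithies_am}), since $\det_{2}$ is determined by the traces of powers $k\geq 2$. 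You instead feed the coincidence of the nonzero eigenvalues, counted with algebraic multiplicity, of $XY=(I+K)^{-1}L^{-1}$ and $YX=J^{-1}$ into the product formula (\ref{eq:id_product}). Both arguments exploit the same underlying principle --- that $AB$ and $BA$ agree away from zero --- but the paper's implementation needs only the elementary cyclicity of the trace, whereas yours rests on the deeper (though classical, cf.\ Deift's commutation theorem) fact that algebraic multiplicities of nonzero eigenvalues of $XY$ and $YX$ coincide for merely bounded factors; your remark that the resolvent identity ``also matches the corresponding Riesz projections'' is precisely the point requiring real work (one must show the two Riesz projections have equal rank), and you rightly flag it as the main obstacle. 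In the other direction, your proof that $J^{-1}\in\sI_{2}$ is slightly more elementary than the paper's: you use only $L^{-1},K\in\sI_{2}$ together with boundedness of $L^{-1/2}$ (valid since $\lambda$ is positive and $\lambda_{n}\to+\infty$), avoiding the $\sI_{4}$ machinery --- although that machinery is needed by the paper anyway in Theorem~\ref{thm:H_product} to conclude $L^{-1}-J^{-1}\in\sI_{1}$, a trace-class statement your decomposition alone does not deliver.
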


\begin{proof} By assumption (\ref{eq:assum_sum}), $L^{-1/2}$ belongs to the
  Schatten class $\sI_{4}$. Since the Schatten classes are norm ideals and
  fulfill $\sI_{p}\sI_{q}\subset\sI_{r}$ whenever $r^{-1}=p^{-1}+q^{-1}$
  \cite[Thm.~2.8]{Simon}, one deduces from (\ref{eq:Jinv_IplusKinv}) that
  $J^{-1}\in\sI_{2}$.

Furthermore, one knows that $\Tr(AB)=\Tr(BA)$ provided $A\in\sI_{p}$,
$B\in\sI_{q}$ and $p^{-1}+q^{-1}=1$ \cite[Cor.~3.8]{Simon}. Hence
\[
\Tr\left((I+K)^{-1}L^{-1}\right)^{k}
=\Tr\left(L^{-1/2}(I+K)^{-1}L^{-1/2}\right)^{k}
=\Tr(J^{-k}),\ \forall k\in\mathbb{N},k\geq2.
\]
It follows that coefficients $a_{m}$ defined in (\ref{eq:Plemejl-Smithies_am})
fulfill
\[
a_{m}((I+K)^{-1}L^{-1})=a_{m}(J^{-1})\ \ \text{for}\ m=0,1,2,\ldots.
\]
The Plemejl-Smithies formula (\ref{eq:Plemejl-Smithies_series}) then
implies (\ref{eq:det_eq}). \end{proof}

\begin{theorem} \label{thm:H_product} Using notation introduced in
  (\ref{eq:J}), suppose a real Jacobi matrix $J$ obeys (\ref{eq:assum_sum})
  and is invertible. Denote by $\lambda_{n}(J)$, $n\in\mathbb{N}$, the
  eigenvalues of $J$ (all of them are real and simple). Then
  $L^{-1}-J^{-1}\in\sI_{1}$,
\begin{equation}
\label{eq:sum_lbdJ-2}
\sum_{n=1}^{\infty}\lambda_{n}(J)^{-2}<\infty,
\end{equation}
and for the regularized characteristic function of $J$ one has
\begin{equation}
\label{eq:H_product}
H_{J}(z)=F_{J}(0)\, e^{bz}\,\prod_{n=1}^{\infty}
\left(1-\frac{z}{\lambda_{n}(J)}\right)e^{z/\lambda_{n}(J)}
\end{equation}
where 
\[
b=\Tr\left(L^{-1}-J^{-1}\right)
=\sum_{n=1}^{\infty}\left(\frac{1}{\lambda_{n}}
-\frac{1}{\lambda_{n}(J)}\right)\!.
\]
\end{theorem}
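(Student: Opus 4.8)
The plan is to read off the factorization directly from the regularized determinant representation in Lemma~\ref{lem:HJ_det2}, rather than invoking Hadamard's theorem abstractly. Writing $J=L+W+W^{*}$ and $K=L^{-1/2}(W+W^{*})L^{-1/2}$, Lemma~\ref{lem:HJ_det2} gives $H_{J}(z)=\det_{2}(I+K-zL^{-1})$. First I would split off the spectral parameter by factoring $I+K-zL^{-1}=(I+K)\bigl(I-z(I+K)^{-1}L^{-1}\bigr)$, which is legitimate since invertibility of $J$ forces $I+K$ to be invertible. Setting $A=K$ and $B=-z(I+K)^{-1}L^{-1}$, both of which lie in $\sI_{2}$ (recall $L^{-1}\in\sI_{2}$ and $(I+K)^{-1}$ is bounded), one has $I+A+B+AB=(I+A)(I+B)=I+K-zL^{-1}$, so the multiplicativity identity (\ref{eq:det2_AB}) applies and yields
\[
H_{J}(z)=\det_{2}(I+K)\,\det_{2}\bigl(I-z(I+K)^{-1}L^{-1}\bigr)\,
\exp\!\bigl(z\,\Tr(K(I+K)^{-1}L^{-1})\bigr).
\]
By Lemma~\ref{lem:HJ_det2} the first factor is $F_{J}(0)$, and by Lemma~\ref{lem:det_eq} the second factor equals $\det_{2}(I-zJ^{-1})$. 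Since $J$ is self-adjoint with simple eigenvalues $\lambda_{n}(J)$, the operator $-J^{-1}\in\sI_{2}$ has nonzero eigenvalues $-\lambda_{n}(J)^{-1}$, each of algebraic multiplicity one, so the product formula (\ref{eq:id_product}) gives $\det_{2}(I-zJ^{-1})=\prod_{n}(1-z/\lambda_{n}(J))e^{z/\lambda_{n}(J)}$. Assembling these three pieces produces (\ref{eq:H_product}) with the exponent constant identified as $b=\Tr(K(I+K)^{-1}L^{-1})$.

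It remains to recast this $b$ and to verify the two summability claims. For $\sum_{n}\lambda_{n}(J)^{-2}<\infty$ I would note that $J^{-1}\in\sI_{2}$ (Lemma~\ref{lem:det_eq}) is self-adjoint, whence $\sum_{n}\lambda_{n}(J)^{-2}=\Tr(J^{-2})=\|J^{-1}\|_{2}^{2}<\infty$. To prove $L^{-1}-J^{-1}\in\sI_{1}$ and to identify $b$, I would rewrite the resolvent identity (\ref{eq:Jinv_IplusKinv}) as
\[
L^{-1}-J^{-1}=L^{-1/2}\bigl(I-(I+K)^{-1}\bigr)L^{-1/2}
=L^{-1/2}K(I+K)^{-1}L^{-1/2}.
\]
Since $L^{-1/2}\in\sI_{4}$, $K\in\sI_{2}$ and $(I+K)^{-1}$ is bounded, the right-hand side lies in $\sI_{1}$ by the ideal property $\sI_{4}\sI_{2}\sI_{4}\subset\sI_{1}$; this gives $L^{-1}-J^{-1}\in\sI_{1}$. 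Cyclicity of the trace in the admissible Schatten pairing (as used in the proof of Lemma~\ref{lem:det_eq}) then yields $\Tr\bigl(L^{-1/2}K(I+K)^{-1}L^{-1/2}\bigr)=\Tr\bigl(K(I+K)^{-1}L^{-1}\bigr)$, so $b=\Tr(L^{-1}-J^{-1})$.

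Finally there is the eigenvalue-series expression $b=\sum_{n}(\lambda_{n}^{-1}-\lambda_{n}(J)^{-1})$, and this is the step I expect to be the main obstacle, since it is not a mere rewriting of $\Tr(L^{-1}-J^{-1})$: the operators $L^{-1}$ and $J^{-1}$ have eigenvalues $\lambda_{n}^{-1}$ and $\lambda_{n}(J)^{-1}$, but neither is separately trace class (only $\sum\lambda_{n}^{-2}<\infty$ is assumed), so the series of differences converges merely conditionally and its value depends on a consistent pairing of the two spectra. My plan is to enumerate the eigenvalues of $L^{-1}$ and of $J^{-1}$ in a common decreasing order (equivalently, $\lambda_{n}$ and $\lambda_{n}(J)$ increasingly), to observe that $J^{-1}$ has only finitely many negative eigenvalues (as $K$ is compact, $I+K$, and hence $J$ by congruence through $L^{1/2}$, has finite negative inertia), and then to invoke the Weyl/Lidskii perturbation bound $\sum_{n}|\lambda_{n}^{-1}-\lambda_{n}(J)^{-1}|\le\|L^{-1}-J^{-1}\|_{1}$ together with the trace formula $\sum_{n}(\lambda_{n}^{-1}-\lambda_{n}(J)^{-1})=\Tr(L^{-1}-J^{-1})$ for self-adjoint trace-class differences. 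Either of these may be secured by citing the standard trace identity or by passing to the truncations $J_{N}=P_{N}JP_{N}$, where $\Tr(L_{N}^{-1}-J_{N}^{-1})=\sum_{n\le N}(\lambda_{n}^{-1}-\lambda_{n}(J_{N})^{-1})$ holds exactly, and controlling the limit in $\sI_{1}$; securing the tail of this limit uniformly in a matched enumeration is the genuinely delicate point.
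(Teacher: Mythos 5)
Your argument coincides with the paper's proof step for step: the same splitting $\det_{2}(I+K-zL^{-1})=\det_{2}(I+K)\,\det_{2}\!\left(I-z(I+K)^{-1}L^{-1}\right)\exp\!\left(z\Tr\left(K(I+K)^{-1}L^{-1}\right)\right)$ via (\ref{eq:det2_AB}), the same use of Lemma~\ref{lem:HJ_det2} and Lemma~\ref{lem:det_eq}, the product formula (\ref{eq:id_product}) applied to $-J^{-1}$, and the identification $b=\Tr\left(L^{-1}-J^{-1}\right)$ through $L^{-1}-J^{-1}=L^{-1/2}K(I+K)^{-1}L^{-1/2}\in\sI_{1}$. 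The eigenvalue-series expression for $b$ that you single out as the genuinely delicate point is simply asserted in the paper --- its proof stops at $b=\Tr\left(L^{-1}-J^{-1}\right)$ --- so your extra care there goes beyond, rather than falls short of, the published argument.
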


\begin{proof}
  Recall equation (\ref{eq:Jinv_IplusKinv}). Since $L^{-1/2}\in\sI_{4}$ and
  $K\in\sI_{2}$ one has, after some straightforward manipulations,
  \begin{equation}
    \label{eq:Linv-Jinv}
    L^{-1}-J^{-1}=L^{-1/2}K(I+K)^{-1}L^{-1/2}\in\sI_{1}.
  \end{equation}
  By Lemma~\ref{lem:det_eq}, the operator $J^{-1}$ is Hermitian and
  Hilbert-Schmidt. This implies (\ref{eq:sum_lbdJ-2}). Furthermore, by
  Lemma~\ref{lem:HJ_det2}, formula (\ref{eq:det2_AB}) and
  Lemma~\ref{lem:det_eq},
  \begin{eqnarray*}
    H_{J}(z) & = & \det_{2}(I+K-zL^{-1})\\
    & = & \det_{2}(I+K)\det_{2}\left(I-z(I+K)^{-1}L^{-1}\right)
    \exp\!\left[z\Tr\left(K(I+K)^{-1}L^{-1}\right)\right]\\
    & = & F_{J}(0)\, e^{bz}\,\det_{2}\left(I-zJ^{-1}\right).
  \end{eqnarray*}
  Here we have used (\ref{eq:Linv-Jinv}) implying
  \[
  \Tr\left(K(I+K)^{-1}L^{-1}\right)
  =\Tr\left(L^{-1/2}K(I+K)^{-1}L^{-1/2}\right)
  =\Tr\left(L^{-1}-J^{-1}\right)=b.
  \]
  Finally, by formula (\ref{eq:id_product}),
  \[
  \det_{2}\left(I-zJ^{-1}\right)
  =\prod_{n=1}^{\infty}\left(1-\frac{z}{\lambda_{n}(J)}\right)
  e^{z/\lambda_{n}(J)}.
  \]
  This completes the proof.
\end{proof}

\begin{corollary}
  For each $\epsilon>0$ there is $R_{\epsilon}>0$ such that for
  $|z|>R_{\epsilon}$,
  \begin{equation}
    \label{eq:HJ_growth}
    \left|H_{J}(z)\right|<\exp\left(\epsilon|z|^{2}\right).
  \end{equation}
\end{corollary}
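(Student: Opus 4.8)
The plan is to read the growth of $H_J$ directly off the Hadamard factorization (\ref{eq:H_product}) proved in Theorem~\ref{thm:H_product}. Writing $a_n:=\lambda_n(J)$ and taking logarithms of absolute values,
\[
\log|H_J(z)| = \log|F_J(0)| + \Re(bz) + \sum_{n=1}^{\infty} g\!\left(\frac{z}{a_n}\right),
\qquad g(w):=\log\bigl|(1-w)e^{w}\bigr| = \log|1-w| + \Re w .
\]
The first two terms are $O(1)$ and $O(|z|)$, hence negligible against $|z|^{2}$, so everything reduces to estimating the series $\Sigma(z):=\sum_{n} g(z/a_n)$.

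First I would record two elementary bounds on the primary factor. From the expansion $\log(1-w)+w=-\sum_{k\ge2}w^{k}/k$ one gets $g(w)=\Re\bigl(-\sum_{k\ge2}w^{k}/k\bigr)\le |w|^{2}$ for $|w|\le 1/2$; and the crude estimate $g(w)\le\log(1+|w|)+|w|$, valid for all $w$, yields after comparison with $|w|^{2}$ on $|w|\ge 1/2$ a global bound $g(w)\le C|w|^{2}$ with an absolute constant $C$. Note that invertibility of $J$ forces every $a_n\neq0$, and since $J^{-1}$ is compact we have $|a_n|\to\infty$, so $\sum_{|a_n|>R_0}a_n^{-2}$ is a genuine tail of the convergent series (\ref{eq:sum_lbdJ-2}).

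Given $\epsilon>0$, the decisive step is to split $\Sigma$ according to the size of $|a_n|$. Using (\ref{eq:sum_lbdJ-2}) choose $R_0$ so large that $C\sum_{|a_n|>R_0}a_n^{-2}<\epsilon/2$. For the tail I apply the global quadratic bound,
\[
\sum_{|a_n|>R_0} g\!\left(\frac{z}{a_n}\right)\le C|z|^{2}\!\!\sum_{|a_n|>R_0}\!a_n^{-2}<\frac{\epsilon}{2}\,|z|^{2}.
\]
The remaining sum runs over the finitely many indices with $|a_n|\le R_0$; for each such fixed nonzero $a_n$ one has $g(z/a_n)\le\log(1+|z|/|a_n|)+|z|/|a_n|=O(|z|)$, so the head contributes only $O(|z|)$ as $|z|\to\infty$. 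Collecting the pieces,
\[
\log|H_J(z)|\le \frac{\epsilon}{2}|z|^{2} + O(|z|) + O(1),
\]
and the lower-order terms are $<(\epsilon/2)|z|^{2}$ once $|z|$ exceeds some $R_\epsilon$, which gives (\ref{eq:HJ_growth}).

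The only genuine subtlety, and the main obstacle, is upgrading from order-$2$ finite type to the arbitrarily small constant $\epsilon$ (that is, minimal type): the global bound $g(w)\le C|w|^{2}$ by itself only yields $|H_J(z)|\le\exp(C'|z|^{2})$. Producing the $\epsilon$ is exactly where the convergence of $\sum_n a_n^{-2}$ in (\ref{eq:sum_lbdJ-2}) enters, by making the tail contribution as small as desired while isolating the finitely many ``resonant'' factors into a harmless $O(|z|)$ term. Equivalently, one may simply invoke the standard fact that a genus-one canonical product whose zeros have square-summable reciprocals is entire of order at most $2$ and of minimal type, combined with the order-one factor $e^{bz}$.
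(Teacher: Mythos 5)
Your proof is correct and follows essentially the same route as the paper: both read the growth bound directly off the Hadamard factorization (\ref{eq:H_product}) of Theorem~\ref{thm:H_product}, the key point being that square-summability (\ref{eq:sum_lbdJ-2}) of the reciprocal eigenvalues makes $H_J$ a genus-one entire function. The paper simply cites the standard growth theorem for genus-one functions (Theorem~XI.3.4/XI.2.6 in \cite{Conway}) at this point, whereas you have inlined that theorem's proof — the head/tail splitting of the canonical product with the quadratic bound on the primary factor — which is a legitimate, self-contained substitute.
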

\begin{proof}
  Theorem~\ref{thm:H_product}, and particularly the product formula
  (\ref{eq:H_product}) implies that $H_{J}(z)$ is an entire function of genus
  one. In that case the growth property (\ref{eq:HJ_growth}) is known to be
  valid; see, for example, Theorem~XI.2.6 in \cite{Conway}.
\end{proof}

\begin{example}
  \label{ex:BesJ_H}
  Put $\lambda_{n}=n$ and $w_{n}=w\neq0$, $\forall n\in\mathbb{N}$. As shown
  in \cite{StampachStovicek}, the Bessel function of the first kind can be
  expressed as
  \begin{equation}
    \label{eq:BesselJ_F}
    J_{\nu}(2w)=\frac{w^{\nu}}{\Gamma(\nu+1)}\,
    \mathfrak{F}\!\left(\left\{ \frac{w}{\nu+k}\right\}_{k=1}^{\infty}\right)\!,
  \end{equation}
  as long as $w,\nu\in\complex$, $\nu\notin-\mathbb{N}$. Using
  (\ref{eq:BesselJ_F}) and that
  \[
  \Gamma(z)=\frac{e^{-\gamma z}}{z}
  \prod_{n=1}^{\infty}\left(1+\frac{z}{n}\right)^{-1}e^{z/n},
  \]
  where $\gamma$ is the Euler constant, one gets
  $H_{J}(z)=e^{\gamma z}w^{z}J_{-z}(2w)$. Applying Theorem~\ref{thm:H_product}
  to the Jacobi matrix in question one reveals the infinite product formula
  for the Bessel function considered as a function of its order. Assuming
  $J_{0}(2w)\neq0$, the formula reads
  \[
  \frac{w^{z}J_{-z}(2w)}{J_{0}(2w)}
  =e^{c(w)z}\,\prod_{n=1}^{\infty}\left(1-\frac{z}{\lambda_{n}(J)}\right)
  e^{z/\lambda_{n}(J)}
  \]
  where
  \[
  c(w)=\frac{1}{J_{0}(2w)}\,\sum_{k=0}^{\infty}(-1)^{k}
  \psi(k+1)\,\frac{w^{2k}}{(k!)^{2}},
  \]
  $\psi(z)=\Gamma'(z)/\Gamma(z)$ is the digamma function, and the expression
  for $c(w)$ is obtained by comparison of the coefficients at $z$ on both
  sides.
\end{example}

\section{Factorization in the coupling constant} \label{sec:coupling}

Let $x=\{x_{n}\}_{n=1}^{\infty}$ be a sequence of nonzero complex
numbers belonging to the domain $D$ defined in (\ref{eq:domain_D}).
Our goal in this section is to prove a factorization formula for the
entire function
\[
f(w):=\mathfrak{F}(wx),\ w\in\complex.
\]
Let us remark that $f(w)$ is even.

To this end, let us put $v_{k}=\sqrt{x_{k}}$, $\forall k$, (any
branch of the square root is suitable) and introduce the auxiliary
Jacobi matrix
\begin{equation}
\label{eq:A_def}
A=\left[\begin{array}{ccccc}
0 & a_{1} & 0 & 0 & \cdots\\
a_{1} & 0 & a_{2} & 0 & \cdots\\
0 & a_{2} & 0 & a_{3} & \cdots\\
0 & 0 & a_{3} & 0 & \cdots\\
\vdots & \vdots & \vdots & \vdots & \ddots
\end{array}\right]\!,\ \text{with}\ a_{k}=v_{k}v_{k+1}\,,\ k\in\mathbb{N}.
\end{equation}
Then $A$ represents a Hilbert-Schmidt operator on $\ell^{2}(\mathbb{N})$
with the Hilbert-Schmidt norm
\[
\|A\|_{2}^{\,2}=2\sum_{k=1}^{\infty}|a_{k}|^{2}=2\sum_{k=1}^{\infty}|x_{k}x_{k+1}|.
\]

The relevance of $A$ to our problem comes from the equality
\[
F_{A}(z)=\mathfrak{F}\!\left(
\left\{ \frac{x_{k}}{z}\right\} _{k=1}^{\infty}\right)=f\!\left(z^{-1}\right),
\]
which can be verified with the aid of (\ref{eq:F_scale_inv}) and
(\ref{eq:F_J_z}). Hence $F_{A}(z)$ is analytic on $\complex\setminus\{0\}$. By
Theorem~\ref{thm:HilbertSchmidt_charfce}, the set of nonzero eigenvalues of
$A$ coincides with the zero set of $F_{A}(z)$. It even turns out that the
algebraic multiplicity of a nonzero eigenvalue $\zeta$ of $A$ equals the
multiplicity of $\zeta$ as a root of the function $F_{A}(z)$, as one infers
from the following proposition.
\begin{proposition}
  \label{prop:alg_mult}
  Under the same assumptions as in Theorem~\ref{thm:HilbertSchmidt_charfce},
  the algebraic multiplicity of any nonzero eigenvalue $\zeta$ of $J$ is equal
  to the multiplicity of the root $\zeta^{-1}$ of the entire function
  $\varphi(z)=F_{J}(z^{-1})
  =\mathfrak{F}\!\left(\{z\gamma_{n}^{\,2}\}_{n=1}^{\infty}\right)$.
\end{proposition}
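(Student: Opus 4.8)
The plan is to identify the entire function $\varphi$ with a regularized (Hilbert--Schmidt) determinant of $J$ and then read off the multiplicities directly from the product formula (\ref{eq:id_product}).

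First I would exploit the finite truncation formula (\ref{eq:char_pol_general}). Since $\lambda_{n}=0$ for all $n$, and writing $J_{n}=P_{n}JP_{n}$ for the top-left $n\times n$ block, (\ref{eq:char_pol_general}) becomes
\[
\det(J_{n}-zI_{n})=(-z)^{n}\,\mathfrak{F}\!\left(\frac{\gamma_{1}^{\,2}}{-z},\ldots,\frac{\gamma_{n}^{\,2}}{-z}\right)\!.
\]
Setting $z=w^{-1}$ and dividing through by $(-z)^{n}$, while using that $\mathfrak{F}$ depends on its arguments only through the consecutive products $x_{k}x_{k+1}$, so that an overall sign of the sequence is immaterial, I obtain
\[
\det(I_{n}-wJ_{n})=\mathfrak{F}\!\left(\{w\gamma_{k}^{\,2}\}_{k=1}^{n}\right)\!.
\]

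Next I would promote this finite-rank identity to the infinite-dimensional $\det_{2}$. Because the diagonal of $J$ vanishes, $\Tr J_{n}=0$, and the defining relation $\det_{2}(I+A)=\det((I+A)\exp(-A))$ yields, at the finite level, $\det(I_{n}-wJ_{n})=\det_{2}(I_{n}-wJ_{n})\exp(-w\Tr J_{n})=\det_{2}(I_{n}-wJ_{n})$. Letting $n\to\infty$, the left-hand side tends to $\varphi(w)=\mathfrak{F}(\{w\gamma_{k}^{\,2}\})$ by (\ref{eq:F_fin_lim}), whereas the right-hand side tends to $\det_{2}(I-wJ)$: indeed $\|P_{n}JP_{n}-J\|_{2}\to0$ since $J\in\sI_{2}$, and $\det_{2}$ is continuous in the Hilbert--Schmidt norm by (\ref{eq:det2A-det2B_estim}). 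Hence $\varphi(w)=\det_{2}(I-wJ)$ for every $w\in\complex$. Applying now the factorization (\ref{eq:id_product}) with $A=J$ and spectral variable $-w$ gives
\[
\varphi(w)=\prod_{n=1}^{N(J)}\left(1-w\mu_{n}(J)\right)\exp\!\left(w\mu_{n}(J)\right)\!,
\]
where the $\mu_{n}(J)$ enumerate the nonzero eigenvalues of $J$ repeated according to their algebraic multiplicity. As each exponential factor is entire and nonvanishing, the order of the zero of $\varphi$ at $w=\zeta^{-1}$ equals the number of indices $n$ with $\mu_{n}(J)=\zeta$, which is precisely the algebraic multiplicity of $\zeta$. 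This is the desired statement.

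The only genuinely delicate point is the limit in the middle step: one must verify that replacing the ordinary determinant by $\det_{2}$ at the finite level is legitimate exactly because $\Tr J_{n}=0$, so that no divergent $\exp(\Tr)$ correction survives in the limit, and that both sides converge to the claimed infinite-dimensional objects. The remaining ingredients—the sign invariance of $\mathfrak{F}$, the elementary identity $\det(I+A)=\det_{2}(I+A)\exp(\Tr A)$ for finite matrices, and the nonvanishing of the exponential factors in the Hadamard product—are routine bookkeeping.
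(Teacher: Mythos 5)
Your proposal is correct and follows essentially the same route as the paper: both identify $\varphi$ with $\det_{2}(I-zJ)$ by applying (\ref{eq:char_pol_general}) to the truncations $J_{N}=P_{N}JP_{N}$ (where $\det=\det_{2}$ since $\Tr J_{N}=0$), passing to the limit via Hilbert--Schmidt convergence and the continuity estimate (\ref{eq:det2A-det2B_estim}), and then reading off multiplicities from the product formula (\ref{eq:id_product}). Your explicit treatment of the sign via the invariance of $\mathfrak{F}$ is a detail the paper leaves implicit, but the argument is the same.
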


\begin{proof}
  Recall that $\gamma_{n}\gamma_{n+1}=w_{n}$ and so, by the assumptions of
  Theorem~\ref{thm:HilbertSchmidt_charfce}, $\{\gamma_{n}^{\,2}\}\in
  D$. Denote again by $P_{N}$, $N\in\mathbb{N}$, the orthogonal projection
  onto the subspace spanned by the first $N$ vectors of the canonical basis in
  $\ell^{2}(\mathbb{N})$. From formula (\ref{eq:char_pol_general}) we deduce
  that
  \[
  \mathfrak{F}\left(\{z\gamma_{n}^{\,2}\}_{n=1}^{N}\right)
  =\det(I-zJ_{N})=\det\!\left((I-zJ_{N})e^{zJ_{N}}\right),
  \]
  where $J_{N}=P_{N}JP_{N}$. Since $P_{N}JP_{N}$ tends to $J$ in the
  Hilbert-Schmidt norm, as $N\rightarrow\infty$, and by continuity of the
  generalized determinant as a functional on the space of Hilbert-Schmidt
  operators (see (\ref{eq:det2A-det2B_estim})) one immediately gets
  \begin{displaymath}
    \varphi(z)=\mathfrak{F}\!\left(\{z\gamma_{n}^{\,2}\}_{n=1}^{\infty}\right)
    =\det\!\left((I-zJ)e^{zJ}\right)=\det_{2}(I-zJ).
  \end{displaymath}
  From (\ref{eq:id_product}) it follows that
  $\varphi(z)=(1-\zeta z)^{m}\,\tilde{\varphi}(z)$ where $m$ is the algebraic
  multiplicity of $\zeta$, $\tilde{\varphi}(z)$ is an entire function and
  $\tilde{\varphi}(\zeta^{-1})\neq0$.
\end{proof}

The zero set of $f(w)$ is at most countable and symmetric with respect to the
origin. One can split $\complex$ into two half-planes so that the border line
passes through the origin and contains no nonzero root of $f$. Fix one of the
half-planes and enumerate all nonzero roots in it as
$\{\zeta_{k}\}_{k=1}^{N(f)}$, with each root being repeated in the sequence
according to its multiplicity. The number $N(f)$ may be either a non-negative
integer or infinity. Then
\[
\spec_{p}(A)\setminus\{0\}
=\left\{ \pm\zeta_{k}^{\,-1};\ k\in\mathbb{N},k\leq N(f)\right\} .
\]
Since $A^{2}$ is a trace class operator one has, by
Proposition~\ref{prop:alg_mult} and Lidskii's theorem,
\begin{equation}
\label{eq:sum_roots2_x_k}
\sum_{k=1}^{N(f)}\frac{1}{\zeta_{k}^{\,2}}\,=\,\frac{1}{2}\,\Tr A^{2}
=\sum_{k=1}^{\infty}x_{k}x_{k+1}.
\end{equation}
Moreover, the sum on the LHS converges absolutely, as it follows from
Weyl's inequality \cite[Thm.~1.15]{Simon}.

\begin{theorem} \label{thm:Fwx_factor} Let $x=\{x_{k}\}_{k=1}^{\infty}$
be a sequence of nonzero complex numbers such that
\[
\sum_{k=1}^{\infty}|x_{k}x_{k+1}|<\infty.
\]
Then zeros of the entire even function $f(w)=\mathfrak{F}(wx)$ can
be arranged into sequences
\[
\{\zeta_{k}\}_{k=1}^{N(f)}\cup\{-\zeta_{k}\}_{k=1}^{N(f)},
\]
with each zero being repeated according to its multiplicity, and
\begin{equation}
\label{eq:Fwx_factor}
f(w)=\prod_{k=1}^{N(f)}\left(1-\frac{w^{2}}{\zeta_{k}^{\,2}}\right)\!.
\end{equation}
\end{theorem}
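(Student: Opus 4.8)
The plan is to establish the product formula \eqref{eq:Fwx_factor} by combining the Hadamard-type factorization \eqref{eq:id_product} for the generalized determinant $\det_2(I-zA)$ with the correspondence, already recorded in the excerpt, between the nonzero eigenvalues of the auxiliary Jacobi matrix $A$ from \eqref{eq:A_def} and the nonzero roots of $f$. The crucial identifications are $F_A(z)=f(z^{-1})$ and, from the proof of Proposition~\ref{prop:alg_mult} applied to $A$ (which satisfies the hypotheses of Theorem~\ref{thm:HilbertSchmidt_charfce}), the entire-function identity
\[
\varphi_A(z):=F_A(z^{-1})=f(z)=\det{}_2(I-zA).
\]
Thus the object to be factored is literally a regularized determinant, and the work reduces to reading off \eqref{eq:id_product} correctly.

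First I would record that $A\in\sI_2$ with the eigenvalues organized by the symmetry of the spectrum. Since $A$ is a real symmetric (hence self-adjoint under the stated complex setting, or at worst a compact operator with the algebraic-multiplicity bookkeeping of Theorem~1.1 in \cite{Simon}) Jacobi matrix with zero diagonal, its spectrum is symmetric about the origin: conjugating $A$ by the diagonal unitary $\diag(1,-1,1,-1,\ldots)$ sends $A$ to $-A$, so nonzero eigenvalues come in pairs $\pm\mu$ with equal algebraic multiplicity. By Proposition~\ref{prop:alg_mult}, each nonzero eigenvalue $\mu$ of $A$ corresponds to a root $\mu^{-1}$ of $f$ of the same multiplicity; writing the roots of $f$ in the chosen half-plane as $\{\zeta_k\}_{k=1}^{N(f)}$ exactly matches the enumeration already set up before the theorem statement, so the nonzero eigenvalues of $A$ are precisely $\{\pm\zeta_k^{-1}\}$.

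Next I would invoke the factorization \eqref{eq:id_product} with $z=1$ and the operator $A$, grouping the paired eigenvalues $\zeta_k^{-1}$ and $-\zeta_k^{-1}$:
\[
f(1)=\det{}_2(I-A)=\prod_{k=1}^{N(f)}\left(1-\zeta_k^{-1}\right)e^{\zeta_k^{-1}}
\left(1+\zeta_k^{-1}\right)e^{-\zeta_k^{-1}}
=\prod_{k=1}^{N(f)}\left(1-\frac{1}{\zeta_k^{\,2}}\right),
\]
the exponential correction factors cancelling within each $\pm$ pair. The same computation run with $wA$ in place of $A$, using $\det_2(I-zA)$ with $z=w$ and the evenness of $f$, gives $f(w)=\det_2(I-wA)=\prod_k(1-w^2/\zeta_k^{\,2})$, which is exactly \eqref{eq:Fwx_factor}. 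The convergence of the product is guaranteed because $\{\zeta_k^{-1}\}\in\ell^2$, already established in \eqref{eq:sum_roots2_x_k}, so that the genus-zero product $\prod(1-w^2/\zeta_k^{\,2})$ converges absolutely and locally uniformly.

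**The main obstacle** I expect is not analytic but bookkeeping: one must verify that the pairing of eigenvalues $\pm\zeta_k^{-1}$ respects algebraic multiplicities even when $A$ is merely compact rather than normal, so that the telescoping of the exponential factors in \eqref{eq:id_product} is legitimate term by term. This is where the $\diag(1,-1,1,-1,\ldots)$ intertwining is decisive, as it gives an honest similarity $A\cong -A$ and hence equality of algebraic multiplicities of $\mu$ and $-\mu$; combined with Proposition~\ref{prop:alg_mult} it also confirms that the half-plane enumeration of roots of $f$ captures each eigenvalue pair exactly once. A minor point to check is that no nonzero eigenvalue is purely ``self-paired'' (i.e. $\mu=-\mu$ forces $\mu=0$), so the pairing is genuine and the border line of the chosen half-plane, passing through the origin and avoiding nonzero roots, introduces no double counting.
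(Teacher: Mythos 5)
Your proposal is correct, but it takes a genuinely different route from the paper's proof. The paper argues function-theoretically: absolute convergence of $\sum\zeta_{k}^{\,-2}$ (from (\ref{eq:sum_roots2_x_k})) shows the rank of $f$ is at most $1$; the bound (\ref{eq:estim_Fx_exp}) gives $|f(w)|\leq\exp\bigl(|w|^{2}\sum|x_{k}x_{k+1}|\bigr)$, so the order is at most $2$; Hadamard's factorization theorem together with evenness and $f(0)=1$ then yields $f(w)=e^{cw^{2}}\prod_{k}\bigl(1-w^{2}/\zeta_{k}^{\,2}\bigr)$, and comparing coefficients at $w^{2}$ against (\ref{eq:sum_roots2_x_k}) forces $c=0$. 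You instead work operator-theoretically: identifying $f(w)=\det_{2}(I-wA)$ (the identity from the proof of Proposition~\ref{prop:alg_mult} applied to $A$), you read the factorization directly off (\ref{eq:id_product}), with the exponential regularizing factors cancelling within each $\pm$ eigenvalue pair; the pairing with equal algebraic multiplicities is justified either by your similarity $\diag(1,-1,1,\ldots)\,A\,\diag(1,-1,1,\ldots)^{-1}=-A$ or, equivalently, by evenness of $f$ combined with Proposition~\ref{prop:alg_mult}. Your route avoids Hadamard's theorem, the growth estimate, and even Lidskii's theorem (you never need the value of $\Tr A^{2}$, only $\sum|\zeta_{k}|^{-2}<\infty$ from Weyl's inequality; indeed (\ref{eq:sum_roots2_x_k}) becomes a corollary of your argument rather than an ingredient), whereas the paper's route keeps the operator theory to a minimum and makes the genus/order structure of $f$ explicit. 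Two points deserve attention in your write-up: first, $A$ is in general \emph{not} real (the $x_{k}$ are complex, so $a_{k}=\sqrt{x_{k}}\sqrt{x_{k+1}}$ is complex); the phrase ``real symmetric, hence self-adjoint'' should be dropped, and fortunately the similarity argument you give does not need it. Second, regrouping the product (\ref{eq:id_product}) into $\pm$ pairs requires absolute convergence, which holds because $\{\mu_{n}(A)\}\in\ell^{2}$ and $\bigl|(1-w\mu)e^{w\mu}-1\bigr|=O(|\mu|^{2})$ locally uniformly in $w$; this is worth stating explicitly. The intermediate computation of $f(1)$ is an unnecessary detour, since $f(w)=\det_{2}(I-wA)$ holds for every $w$ at once.
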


\begin{proof} Equality (\ref{eq:Fwx_factor}) can be deduced from
Hadamard's factorization theorem; see, for example, \cite[Chp.~XI]{Conway}.
In fact, the absolute convergence of the series $\sum\zeta_{k}^{\,-2}$
in (\ref{eq:sum_roots2_x_k}) means that the rank of $f$ is at most
$1$. Furthermore, (\ref{eq:estim_Fx_exp}) implies that
\[
|f(w)|\leq\exp\!\left(|w|^{2}\sum_{k=1}^{\infty}\left|x_{k}x_{k+1}\right|\right)\!,
\]
and so the order of $f$ is less than or equal to $2$. Hadamard's
factorization theorem tells us that the genus of $f$ is at most $2$.
Taking into account that $f$ is even and $f(0)=1$, this means nothing
but
\[
f(w)=\exp(cw^{2})\,\prod_{k=1}^{N(f)}\left(1-\frac{w^{2}}{\zeta_{k}^{\,2}}\right)
\]
for some $c\in\complex$. Equating the coefficients at $w^{2}$ one gets
\[
-\sum_{k=1}^{\infty}x_{k}x_{k+1}=c-\sum_{k=1}^{N(f)}\frac{1}{\zeta_{k}^{\,2}}\,.
\]
According to (\ref{eq:sum_roots2_x_k}), $c=0$. \end{proof}

\begin{corollary}
  \label{thm:Rayleigh-like}
  For any $n\in\mathbb{N}$ (and recalling (\ref{eq:McalN_def}),
  (\ref{eq:alpha_def})),
\begin{equation}
\label{eq:sum_dzeta_2N}
\sum_{k=1}^{N(f)}\frac{1}{\zeta_{k}^{\,2n}}
=n\sum_{m\in\mathcal{M}(n)}\alpha(m)\sum_{k=1}^{\infty}
\prod_{j=1}^{d(m)}\left(x_{k+j-1}x_{k+j}\right)^{m_{j}}.
\end{equation}
\end{corollary}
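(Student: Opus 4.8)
The plan is to compute the Taylor expansion of $\log f(w)$ about $w=0$ in two different ways and to match the coefficients of $w^{2n}$. First I would extract the generating function of the left-hand side from the product formula of Theorem~\ref{thm:Fwx_factor}. Since $\pm\zeta_{k}^{\,-1}$ are the nonzero eigenvalues of the Hilbert-Schmidt (hence compact) operator $A$, one has $|\zeta_{k}|\to\infty$, so $r:=\inf_{k}|\zeta_{k}|>0$. For $|w|<r$ every factor $1-w^{2}/\zeta_{k}^{\,2}$ is nonzero and, using $|\log(1-u)|\leq2|u|$ for $|u|\leq\tfrac12$ together with the absolute convergence of $\sum_{k}\zeta_{k}^{\,-2}$ recorded in (\ref{eq:sum_roots2_x_k}), the series $\sum_{k}\log(1-w^{2}/\zeta_{k}^{\,2})$ converges absolutely and locally uniformly there. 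Hence
\[
\log f(w)=\sum_{k=1}^{N(f)}\log\!\left(1-\frac{w^{2}}{\zeta_{k}^{\,2}}\right)
=-\sum_{n=1}^{\infty}\frac{1}{n}\left(\sum_{k=1}^{N(f)}\frac{1}{\zeta_{k}^{\,2n}}\right)w^{2n},
\]
the interchange of the two summations being justified by absolute convergence.

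On the other hand, for $|w|$ small enough that $|w|^{2}\sum_{k}|x_{k}x_{k+1}|<\log2$, the estimate (\ref{eq:estim_Fx_exp}) gives $|f(w)-1|<1$, so $\log f(w)=\log\mathfrak{F}(wx)$ is given by the series in Proposition~\ref{prop:logF} applied to the sequence $wx=\{wx_{k}\}$. Each factor there becomes $(wx_{k+j-1}\cdot wx_{k+j})^{m_{j}}=w^{2m_{j}}(x_{k+j-1}x_{k+j})^{m_{j}}$, so the inner product over $j=1,\ldots,d(m)$ carries the homogeneity factor $w^{2|m|}$. Collecting the terms with $|m|=n$, which by (\ref{eq:McalN_def}) are precisely the multiindices in $\mathcal{M}(n)$, the coefficient of $w^{2n}$ in $\log\mathfrak{F}(wx)$ equals
\[
-\sum_{m\in\mathcal{M}(n)}\alpha(m)\sum_{k=1}^{\infty}\prod_{j=1}^{d(m)}\left(x_{k+j-1}x_{k+j}\right)^{m_{j}}.
\]

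Comparing the coefficient of $w^{2n}$ in the two expansions and multiplying by $-n$ then yields (\ref{eq:sum_dzeta_2N}). The only point requiring care is the legitimacy of the two logarithmic expansions on a common punctured neighborhood of the origin and the rearrangement of the resulting double series; both are guaranteed by the estimates above, namely the absolute convergence of $\sum_{k}\zeta_{k}^{\,-2}$ from (\ref{eq:sum_roots2_x_k}) and the bound $|w|^{2}\sum_{k}|x_{k}x_{k+1}|<\log2$ ensuring applicability of Proposition~\ref{prop:logF}. Once both expansions hold on such a neighborhood, uniqueness of the power series coefficients of the analytic function $\log f$ finishes the argument.
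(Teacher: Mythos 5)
Your proof is correct and follows essentially the same route as the paper's: expand $\log f(w)$ at $w=0$ via Proposition~\ref{prop:logF} applied to the sequence $wx$, take the logarithm of the factorization (\ref{eq:Fwx_factor}), and equate the coefficients at $w^{2n}$. The paper's proof is just a terse two-sentence version of this; your added justifications (local uniform convergence of the sum of logarithms via (\ref{eq:sum_roots2_x_k}), and the smallness condition $|w|^{2}\sum_{k}|x_{k}x_{k+1}|<\log 2$ ensuring applicability of Proposition~\ref{prop:logF}) are exactly the details it leaves implicit.
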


\begin{proof} Using Proposition~\ref{prop:logF}, one can expand
$\log f(w)$ into a power series at $w=0$. Applying $\log$ to (\ref{eq:Fwx_factor})
and equating the coefficients at $w^{2n}$ gives (\ref{eq:sum_dzeta_2N}).
\end{proof}

If the sequence $\{x_{k}\}$ in Theorem~\ref{thm:Fwx_factor} is
positive one has some additional information about the zeros of $f(w)$.
In that case the $v_{k}$s in (\ref{eq:A_def}) can be chosen positive,
and so $A$ is a self-adjoint Hilbert-Schmidt operator. The zero set
of $f$ is countable and all roots are real, simple and have no finite
accumulation points. Enumerating positive zeros in ascending order
as $\zeta_{k}$, $k\in\mathbb{N}$, factorization (\ref{eq:Fwx_factor})
and identities (\ref{eq:sum_dzeta_2N}) hold true. Since the first
positive root $\zeta_{1}$ is strictly smaller than all other positive
roots, one has
\[
\zeta_{1}=\lim_{N\to\infty}\,\left(\sum_{m\in\mathcal{M}(N)}\alpha(m)
\sum_{k=1}^{\infty}\prod_{j=1}^{d(m)}
\left(x_{k+j-1}x_{k+j}\right)^{m_{j}}\right)^{\!\!-1/(2N)}.
\]

\begin{remark}\rm Still assuming the sequence $\{x_{k}\}$ to be positive
let $g(z)$ be an entire function defined by
\[
g(z)=1+\sum_{n=1}^{\infty}g_{n}z^{n}
=\prod_{k=1}^{\infty}\left(1-\frac{z}{\zeta_{k}^{\,2}}\right)\!,
\]
i.e. $g(w^{2})=f(w)$. In some particular cases the coefficients $g_{n}$
may be known explicitly and then the spectral zeta function can be
evaluated recursively. Put
\[
\sigma(2n)=\sum_{k=1}^{\infty}\frac{1}{\zeta_{k}^{\,2n}}\,,\ n\in\mathbb{N}.
\]
Taking the logarithmic derivative of $g(z)$ and equating coefficients
at the same powers of $z$ leads to the recurrence rule
\begin{equation}
\label{eq:log_der_recurr}
\sigma(2)=-g_{1},\ \sigma(2n)
=-ng_{n}-\sum_{k=1}^{n-1}g_{n-k}\,\sigma(2k)\ \ \text{for}\ n>1.
\end{equation}
\end{remark}

\begin{example} Put $x_{k}=(\nu+k)^{-1}$, with $\nu>-1$. Recalling
(\ref{eq:BesselJ_F}) and letting $z=w/2$, the factorization of the
Bessel function \cite{Watson},
\[
\left(\frac{z}{2}\right)^{-\nu}\Gamma(\nu+1)J_{\nu}(z)
=\prod_{k=1}^{\infty}\left(1-\frac{z^{2}}{j_{\nu,k}^{\,2}}\right)\!,
\]
is obtained as a particular case of Theorem~\ref{thm:Fwx_factor}.
The zeros of $J_{\nu}(z)$, called $j_{\nu,k}$, also occur in the
definition of the so called Rayleigh function \cite{Kishore},
\[
\sigma_{\nu}(s)=\sum_{k=1}^{\infty}\frac{1}{j_{\nu,k}^{\,\, s}}\,,\text{ }
\Re s>1.
\]
Corollary~\ref{thm:Rayleigh-like} implies the formula
\[
\sigma_{\nu}(2N)=2^{-2N}N\sum_{k=1}^{\infty}
\sum_{m\in\mathcal{M}(N)}\alpha(m)
\prod_{j=1}^{d(m)}
\left(\frac{1}{(j+k+\nu-1)(j+k+\nu)}\right)^{\! m_{j}}\!,\ N\in\mathbb{N}.
\]
\end{example}

\begin{example}
  \label{ex:q-Airy}
  This examples is perhaps less commonly known and concerns the Ramanujan
  function, also interpreted as the $q$-Airy function by some authors
  \cite{Ismail,WangWong}, and defined by
\begin{equation}
\label{eq:def_qAiry}
A_{q}(z):=\,_{0}\phi_{1}(\,;0;q,-qz)
=\sum_{n=0}^{\infty}\frac{q^{n^{2}}}{(q;q)_{n}}\,(-z)^{n},
\end{equation}
where $\,_{0}\phi_{1}(\,;b;q,z)$ is the basic hypergeometric series
($q$-hypergeometric series) and $(a;q)_{k}$ is the $q$-Pochhammer
symbol (see, for instance, \cite{GasperRahman}). In (\ref{eq:def_qAiry}),
we suppose that $0<q<1$ and $z\in\complex$. It has been shown
in \cite{StampachStovicek} that
\begin{displaymath}
A_{q}(w^{2})
=q\,\mathfrak{F}\!\left(\left\{ wq^{(2k-1)/4}\right\} _{k=1}^{\infty}\right)\!.
\end{displaymath}
Denote by $0<\zeta_{1}(q)<\zeta_{2}(q)<\zeta_{3}(q)<\ldots$ the positive zeros
of $w\mapsto A_{q}(w^{2})$ and put $\iota_{k}(q)=\zeta_{k}(q){}^{2}$,
$k\in\mathbb{N}$. Then Theorem~\ref{thm:Fwx_factor} tells us that the zeros of
$A_{q}(z)$ are exactly $0<\iota_{1}(q)<\iota_{2}(q)<\iota_{3}(q)<\ldots$, all
of them are simple and
\[
A_{q}(z)=\prod_{k=1}^{\infty}\left(1-\frac{z}{\iota_{k}(q)}\right)\!.
\]
One has
$\left\{ \iota_{k}(q){}^{-1/2};\text{ }k\in\mathbb{N}\right\}
=\spec(\pmb{A}(q))\backslash\{0\}$
where $\pmb{A}(q)$ is the Hilbert-Schmidt operator in $\ell^{2}(\mathbb{N})$
whose matrix is of the form (\ref{eq:A_def}), with $a_{k}=q^{k/2}$.
Corollary~\ref{thm:Rayleigh-like} yields a formula for the spectral zeta
function $D_{N}(q)$ associated with $A_{q}(z)$, namely
\[
D_{N}(q):=\sum_{k=1}^{\infty}\frac{1}{\iota_{k}(q){}^{N}}
=\frac{Nq^{N}}{1-q^{N}}
\sum_{m\in\mathcal{M}(N)}\alpha(m)\, q^{\epsilon_{1}(m)},\ N\in\mathbb{N},
\]
where, $\forall m\in\mathbb{N}^{\ell}$,
$\epsilon_{1}(m)=\sum_{j=1}^{\ell}(j-1)\, m_{j}$. In accordance with
(\ref{eq:log_der_recurr}), from the power series expansion of $A_{q}(z)$ one
derives the recurrence rule
\[
D_{n}(q)=(-1)^{n+1}\,\frac{nq^{n^{2}}}{(q;q)_{n}}
-\sum_{k=1}^{n-1}(-1)^{k}\,\frac{q^{k^{2}}}{(q;q)_{k}}\,
D_{n-k}(q),\ n=1,2,3,\ldots.
\]
\end{example}

Consider now a real Jacobi matrix $J$ of the form (\ref{eq:J}) such that the
diagonal sequence $\{\lambda_{n}\}$ is semibounded. Suppose further that the
off-diagonal elements $w_{n}$ depend on a real parameter $w$ as
$w_{n}=w\omega_{n}$, $n\in\mathbb{N}$, with $\{\omega_{n}\}$ being a fixed
sequence of positive numbers. Following physical terminology one may call $w$
the coupling constant. Denote $\lambda_{\text{inf}}=\inf\lambda_{n}$. Assume
that
\[
\sum_{n=1}^{\infty}\frac{\omega_{n}^{\,2}}{(\lambda_{n}-z)(\lambda_{n+1}-z)}<\infty
\]
for some and hence any $z<\lambda_{\text{inf}}$. For $z<\lambda_{\text{inf}}$,
Theorem~\ref{thm:Fwx_factor} can be applied to the sequence
\[
x_{n}(z)=\frac{\kappa_{n}^{\,2}}{\lambda_{n}-z}\,,\ n\in\mathbb{N},
\]
where $\{\kappa_{n}\}$ is defined recursively by $\kappa_{1}=1$,
$\kappa_{n}\kappa_{n+1}=\omega_{n}$; comparing to (\ref{eq:gamma_def})
one has $\kappa_{2k-1}=\gamma_{2k-1}$, $\kappa_{2k}=\gamma_{2k}/w$.
Let
\[
F_{J}(z;w)=\mathfrak{F}\!\left(\left\{ \frac{\gamma_{n}^{\,2}}
{\lambda_{n}-z}\right\} _{n=1}^{\infty}\right)
=\mathfrak{F}(\{w\, x_{n}(z)\}_{n=1}^{\infty})
\]
be the characteristic function of $J=J(w)$. We conclude that for
every $z<\lambda_{\text{inf}}$ fixed, the equation $F_{J}(z;w)=0$
in the variable $w$ has a countably many positive simple roots $\zeta_{k}(z)$,
$k\in\mathbb{N}$, enumerated in ascending order, and
\[
F_{J}(z;w)=\prod_{k=1}^{\infty}\left(1-\frac{w^{2}}{\zeta_{k}(z)^{2}}\right)\!.
\]

\section*{Acknowledgments}

The authors wish to acknowledge gratefully partial support from the
following grants: Grant No.\ 201/09/0811 of the Czech Science Foundation
(P.\v{S}.) and Grant No.\ LC06002 of the Ministry of Education of
the Czech Republic (F.\v{S}.).


\end{document}